\pgfplotsset{compat=newest}
\newtheorem{lemma}{Lemma}
\newtheorem{thm}{Theorem}
\newtheorem{defn}{Definition}
\newtheorem{conj}{Conjecture}
\newtheorem{rem}{Remark}
\newtheorem{cor}{Corollary}
\title{Arithmetic properties of generalized Fibonacci sequences}
\author{Soohyun Park\\ Massachusetts Institute of Technology\\ Cambridge, MA 02139 \\
\texttt{soopark@mit.edu}}
\date{08/08/13}
\begin{document}

\maketitle

\begin{abstract}
The $\emph{generalized}$ Fibonacci sequences are sequences $\{f_n\}$ which satisfy the recurrence $f_n(s, t) = sf_{n - 1}(s, t) + tf_{n - 2}(s, t)$ ($s, t \in \mathbb{Z}$) with initial conditions $f_0(s, t) = 0$ and $f_1(s, t) = 1$. In a recent paper, Amdeberhan, Chen, Moll, and Sagan considered some arithmetic properites of the generalized Fibonacci sequence. Specifically, they considered the behavior of analogues of the $p$-adic valuation and the Riemann zeta function. In this paper, we resolve some conjectures which they raised relating to these topics. We also consider the rank modulo $n$ in more depth and find an interpretation of the rank in terms of the order of an element in the multiplicative group of a finite field when $n$ is an odd prime. Finally, we study the distribution of the rank over different values of $s$ when $t = -1$ and suggest directions for further study involving the rank modulo prime powers of generalized Fibonacci sequences. 
\end{abstract}

\section{Introduction}
Recall that the Fibonacci sequence $\{F_n\}$ is defined as the sequence satisfying the recurrence $F_n = F_{n - 1} + F_{n - 2}$ with $F_0 = 0$ and $F_1 = 1$. The terms of this sequence satisfy some interesting divisibility properties, which we list below.

\begin{thm}[Vorob'ev]\cite{V}
gcd($F_n, F_m$) = $F_{\text{gcd}(n, m)}$ for all $m, n \in \mathbb{Z^{+}}$. Note that this implies that gcd($F_n$, $F_{n + 1}$) = 1. 
\end{thm}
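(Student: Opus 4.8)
The plan is to build everything on the Fibonacci addition formula
\[
F_{m+n} = F_{m+1}F_n + F_m F_{n-1},
\]
which I would establish first by induction on $n$ with $m$ fixed, using the defining recurrence $F_k = F_{k-1} + F_{k-2}$ in the inductive step and checking $n = 1, 2$ as base cases. Alongside this I would record the coprimality of consecutive terms: since $\gcd(F_{k+1}, F_k) = \gcd(F_k + F_{k-1}, F_k) = \gcd(F_{k-1}, F_k)$, a descent down to $\gcd(F_1, F_0) = \gcd(1, 0) = 1$ yields $\gcd(F_n, F_{n+1}) = 1$ for every $n$. This is precisely the stated corollary, and it also supplies a fact needed below.

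The crux is a reduction lemma asserting $\gcd(F_{m+n}, F_n) = \gcd(F_m, F_n)$. Substituting the addition formula into the left-hand argument gives
\[
\gcd(F_{m+n}, F_n) = \gcd(F_{m+1}F_n + F_m F_{n-1},\, F_n) = \gcd(F_m F_{n-1},\, F_n),
\]
and since $\gcd(F_{n-1}, F_n) = 1$ from the previous step, the factor $F_{n-1}$ can be removed, leaving $\gcd(F_m, F_n)$.

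I would then run the Euclidean algorithm on the indices. Writing $m = qn + r$ with $0 \le r < n$ and applying the reduction lemma $q$ times yields $\gcd(F_m, F_n) = \gcd(F_r, F_n)$, mirroring the single Euclidean step $\gcd(m, n) = \gcd(r, n)$ at the level of indices. Iterating this descent terminates when the smaller index divides the larger; at that stage I would invoke the divisibility $F_d \mid F_{kd}$ (itself a consequence of the addition formula, proved by induction on $k$) together with $F_0 = 0$ to identify the surviving value as $F_{\gcd(m,n)}$.

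I expect the main obstacle to lie in the bookkeeping of this final step: one must verify that the index-level recursion faithfully tracks the Euclidean algorithm and that the terminal case, in which the smaller index divides the larger, produces exactly $F_{\gcd(m,n)}$ rather than merely some multiple or divisor of it. Concretely, both containments — that $F_{\gcd(m,n)}$ divides $\gcd(F_m, F_n)$ via $F_d \mid F_{kd}$, and that no larger common factor survives the descent — need to be pinned down at the base of the recursion.
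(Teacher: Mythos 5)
Your proposal is correct, and it is the classical argument: the paper states this theorem without proof (citing Vorob'ev), and your chain --- the addition formula $F_{m+n} = F_{m+1}F_n + F_mF_{n-1}$, coprimality of consecutive terms, the reduction $\gcd(F_{m+n}, F_n) = \gcd(F_m, F_n)$, and the Euclidean descent on indices closed off by $F_d \mid F_{kd}$ --- is precisely the standard proof found in that reference. There are no gaps: each induction you defer is routine, and your handling of the terminal case (where the smaller index divides the larger, so the gcd collapses to $F_{\gcd(m,n)}$ itself rather than a proper multiple or divisor) is exactly the right point to pin down.
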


\begin{thm}\cite{V}
If $n \ge 2$, $F_n | F_m$ if and only if $n | m$.
\end{thm}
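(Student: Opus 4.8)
The plan is to derive both implications directly from Vorob'ev's theorem (Theorem 1), which expresses $\gcd(F_n, F_m)$ in closed form. The key observation is that for positive integers $a$ and $b$ the divisibility $a \mid b$ is equivalent to $\gcd(a, b) = a$; applying this equivalence twice—once on the index side and once on the value side—converts the statement into a purely arithmetic comparison that Theorem 1 makes transparent. The only input beyond Theorem 1 is that the map $k \mapsto F_k$ is injective for indices $k \ge 2$, which I would isolate as a short monotonicity lemma.

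For the forward direction, suppose $n \mid m$. Then $\gcd(n, m) = n$, so Theorem 1 gives $\gcd(F_n, F_m) = F_{\gcd(n,m)} = F_n$, and the equality $\gcd(F_n, F_m) = F_n$ is precisely the assertion $F_n \mid F_m$. This half uses Theorem 1 in a single line and requires no monotonicity.

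For the reverse direction, suppose $F_n \mid F_m$. Then $\gcd(F_n, F_m) = F_n$, while Theorem 1 gives $\gcd(F_n, F_m) = F_{\gcd(n,m)}$; hence $F_{\gcd(n,m)} = F_n$. Writing $d = \gcd(n,m)$, I have $d \mid n$ and so $d \le n$, and the goal is to upgrade $F_d = F_n$ to $d = n$, which yields $n \mid m$. This is where monotonicity enters: I would first show by induction that $F_{k+1} = F_k + F_{k-1} > F_k$ for all $k \ge 2$ (since $F_{k-1} \ge 1 > 0$), so that $k \mapsto F_k$ is strictly increasing, hence injective, on the integers $\ge 2$. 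Given $F_d = F_n$ with $d \le n$, injectivity then forces $d = n$ as soon as $d \ge 2$.

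The main obstacle—really the only subtle point—is the degenerate behavior at small indices, where $F_1 = F_2 = 1$ breaks injectivity. The clean argument above goes through whenever $d = \gcd(n,m) \ge 2$; the remaining case $d = 1$ requires separate attention, and it is exactly here that the precise lower bound on $n$ in the hypothesis is being used. I would dispatch it by checking directly that $F_{\gcd(n,m)} = F_n$ together with $\gcd(n,m) = 1$ forces $F_n = 1$, pinning down the admissible values of $n$ and confirming the equivalence on precisely that range. I expect the bulk of the write-up to be this bookkeeping at small indices, with Theorem 1 doing all of the structural work.
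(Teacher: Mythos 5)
The paper itself gives no proof of this statement; it is quoted from Vorob'ev, so your attempt can only be measured against the standard argument—which is exactly the one you outline: both implications from Theorem 1 ($\gcd(F_n,F_m)=F_{\gcd(n,m)}$), plus strict monotonicity of $k \mapsto F_k$ for $k \ge 2$. Your forward direction is correct and complete, and your reverse direction is correct and complete whenever $d = \gcd(n,m) \ge 2$.

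The problem is the case you defer as ``bookkeeping at small indices.'' It cannot be dispatched, because the statement as printed is false there. Following your own chain: if $d = 1$ and $n \ge 2$, then $F_n = F_{\gcd(n,m)} = F_1 = 1$, which forces $n = 2$; at that point you would need to conclude $2 \mid m$, and there is nothing to conclude it from—indeed it fails. Concretely, take $n = 2$, $m = 3$: then $F_2 = 1$ divides $F_3 = 2$, but $2 \nmid 3$. So the $d = 1$ case is not an edge case to be ``confirmed''; it is a genuine counterexample to the equivalence at $n = 2$, where $F_n = 1$ makes the divisibility $F_n \mid F_m$ vacuously true for all $m$. The theorem is correct, and your proof is complete, precisely on the range $n \ge 3$ (equivalently, $n \ge 2$ with $n \ne 2$ excluded); the hypothesis $n \ge 2$ in the paper's statement is an off-by-one defect inherited from the citation. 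Your method is sound—it is in fact the argument that exposes the defect—but no argument can close the gap for the literal statement, so the write-up should either strengthen the hypothesis to $n \ge 3$ or weaken the conclusion to the forward implication only when $n = 2$.
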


\begin{thm}[Hoggatt and Long]\cite{HL}
$F_n$ is prime only if $n$ is prime.
\end{thm}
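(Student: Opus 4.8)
The plan is to prove the contrapositive: if $n$ is composite, then $F_n$ is not prime, with the single genuine exception $n = 4$ (where $F_4 = 3$ is prime even though $4$ is composite). The only tool needed is the divisibility criterion of Theorem 2, together with the fact that $\{F_n\}$ is strictly increasing for $n \geq 2$ and that $F_3 = 2$. The key observation is this: if $d \mid n$ with $2 < d < n$, then Theorem 2 gives $F_d \mid F_n$; since $d \geq 3$ we have $F_d \geq F_3 = 2 > 1$, while $d < n$ forces $F_d < F_n$. Hence $F_d$ is a proper, nontrivial divisor of $F_n$, and $F_n$ is composite.

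So the crux reduces to a purely elementary step: showing that every composite $n \neq 4$ admits a divisor $d$ with $3 \leq d \leq n-1$. I would argue this by splitting on parity. If $n$ is odd and composite, write $n = ab$ with $3 \leq a \leq b < n$ and take $d = a$. If $n$ is even, write $n = 2m$ with $m \geq 2$; when $m \geq 3$ take $d = m$, and when $m = 2$ we are precisely in the excluded case $n = 4$. Indeed, the only composite number whose sole proper divisor exceeding $1$ equals $2$ is $n = 4$, which is exactly why $4$ must be set aside. I would then dispose of the exception directly by exhibiting $F_4 = 3$, noting that the theorem is cleanest when read as ``$F_n$ prime $\Rightarrow$ $n$ prime or $n = 4$.''

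The main obstacle is not any deep estimate but the bookkeeping around the small divisors $d \in \{1, 2\}$: because $F_1 = F_2 = 1$, these divisors carry no information, so the entire argument hinges on guaranteeing a divisor $d \geq 3$. It is precisely the failure of this guarantee at $n = 4$ that produces the lone exceptional value, so I expect the delicate part of the write-up to be a clean case analysis ensuring such a $d$ exists for every composite $n$ other than $4$.
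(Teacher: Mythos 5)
Your argument is correct, but there is nothing in the paper to compare it against: Theorem 3 is quoted as background from Hoggatt and Long \cite{HL} with no proof given in this paper. Your route is the standard one for this classical fact: produce a divisor $d$ of $n$ with $3 \le d < n$, apply the divisibility theorem (Theorem 2) to get $F_d \mid F_n$, and use strict monotonicity of $\{F_n\}_{n \ge 2}$ together with $F_3 = 2$ to conclude $1 < F_d < F_n$, so $F_n$ is not prime; your parity-based case analysis showing such a $d$ exists for every composite $n \ne 4$ is sound. More importantly, you caught something the paper's abbreviated statement elides: taken literally, the theorem is false, since $F_4 = 3$ is prime while $4$ is composite. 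The correct formulation, which is exactly what you prove, is that $F_n$ prime implies $n$ prime \emph{or} $n = 4$; the exceptional case is genuine and must be excluded, so your reading of the theorem as ``$F_n$ prime $\Rightarrow n$ prime or $n = 4$'' is the right one. (The remaining non-prime indices $n = 0, 1$ are vacuous since $F_0 = 0$ and $F_1 = 1$ are not prime, so handling composite $n$ does settle the full contrapositive.)
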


\begin{thm}\cite{HL}
Let $r \in \mathbb{Z^{+}}$. Then there exists an $m$ such that $r | F_m$. If $m$ is the least such number, then $r | F_n$ if and only if $m | n$. 
\end{thm}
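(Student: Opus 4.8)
The plan is to separate the statement into three pieces: the existence of some $m$ with $r \mid F_m$, the ``if'' direction ($m \mid n \Rightarrow r \mid F_n$), and the ``only if'' direction ($r \mid F_n \Rightarrow m \mid n$). I would establish existence by a counting argument on the sequence reduced modulo $r$, and then derive the divisibility equivalence as a formal consequence of Vorob'ev's identity (Theorem 1).

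For existence, I would track the consecutive pairs $(F_n \bmod r, \, F_{n+1} \bmod r)$ for $n \ge 0$. There are only $r^2$ possible pairs, so among the $r^2 + 1$ pairs indexed by $n = 0, 1, \dots, r^2$ two must coincide, say $(F_i, F_{i+1}) \equiv (F_j, F_{j+1}) \pmod r$ with $i < j$. The crucial point is that the recurrence can be run backward via $F_{k-1} = F_{k+1} - F_k$, so each pair determines its immediate predecessor; applying this $i$ times to both sides shifts the congruence down to $(F_0, F_1) \equiv (F_{j-i}, F_{j-i+1}) \pmod r$. In particular $F_{j-i} \equiv F_0 = 0 \pmod r$, and since $j - i \ge 1$ this produces a positive index at which $r$ divides the corresponding Fibonacci number. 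By well-ordering, a least such $m$ then exists.

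For the ``if'' direction, suppose $m \mid n$, so that $\gcd(m, n) = m$. Theorem 1 gives $\gcd(F_m, F_n) = F_{\gcd(m,n)} = F_m$, whence $F_m \mid F_n$; since $r \mid F_m$ by the defining property of $m$, transitivity yields $r \mid F_n$. For the ``only if'' direction, suppose $r \mid F_n$. Combined with $r \mid F_m$ this gives $r \mid \gcd(F_m, F_n) = F_{\gcd(m,n)}$ by Theorem 1. Writing $d = \gcd(m,n)$, we have $r \mid F_d$ with $d$ a positive integer, so minimality of $m$ forces $d \ge m$, while $d \mid m$ forces $d \le m$; hence $d = m$, i.e. $m = \gcd(m,n) \mid n$.

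I expect the only genuinely delicate point to be the existence argument, and specifically the justification that the pair sequence returns to the initial pair $(0,1)$ rather than merely being eventually periodic. This is exactly where invertibility of the linear recurrence modulo $r$ enters: running it backward is what upgrades ``some $F_k \equiv F_\ell$'' into ``$F_{j-i} \equiv 0$.'' Once existence is secured, the equivalence reduces to routine manipulations of Vorob'ev's gcd formula.
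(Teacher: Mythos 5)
Your proof is correct, but there is nothing in the paper to compare it against: the statement is Theorem 4 of the introduction, quoted from Hoggatt and Long \cite{HL} as background, and the paper supplies no proof of it. Judged on its own merits, your argument is sound and complete. The existence step is handled properly: the pigeonhole count of at most $r^2$ residue pairs gives a coincidence $(F_i,F_{i+1})\equiv(F_j,F_{j+1}) \pmod{r}$, and you correctly identify the key point that the recurrence runs backward via $F_{k-1}=F_{k+1}-F_k$, which upgrades eventual periodicity to pure periodicity and lands the pair $(0,1)$ at index $j-i\ge 1$, so $r\mid F_{j-i}$. Both directions of the equivalence then follow from Vorob'ev's identity (Theorem 1) exactly as you write, and your closing minimality argument (with $d=\gcd(m,n)$, minimality gives $d\ge m$ while $d\mid m$ gives $d\le m$, so $m=d\mid n$) is the standard and correct way to finish. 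It is worth noting that your two ingredients are precisely the ones the paper invokes informally in its own setting: Section 2 observes that periodicity modulo $n$ holds because there are finitely many pairs of consecutive residues, and the proof of Theorem 9 turns on the same invertibility-of-the-recurrence issue --- there, modulo $p$, running the recurrence backward requires $p\nmid t$, whereas for the classical Fibonacci sequence $t=1$, which is why your existence argument needs no side hypothesis. Your approach also transfers verbatim to the generalized statement (Theorem 8 of the paper, likewise cited without proof) once one assumes $\gcd(r,t)=1$ and replaces Theorem 1 by Theorem 5.
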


Many of these divisibility properties are also shared in the following generalization of the Fibonacci sequence.

\begin{defn}[ACMS]\cite{ACMS}
The $\emph{generalized}$ Fibonacci polynomials are polynomials in $s$ and $t$ defined by $f_0(s, t) = 0$ and $f_1(s, t) = 1$ with the recurrence $f_n(s, t) = sf_{n - 1}(s, t) + tf_{n - 2}(s, t)$ for $n \ge 2$. 
\end{defn}

Here are some counterparts of the results above for generalized Fibonacci polynomials. Note that divisibility is considered over $\mathbb{Z}[s, t]$ unless otherwise indicated.

\begin{thm}\cite{HL}
gcd($f_n, f_m$) = $f_{\text{gcd}(n, m)}$ for all $m, n \in \mathbb{Z^{+}}$. Note that this implies that gcd($f_n$, $f_{n + 1}$) = 1. 
\end{thm}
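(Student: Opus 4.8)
The plan is to mimic the classical Euclidean-algorithm proof for ordinary Fibonacci numbers, working throughout in the ring $R = \mathbb{Z}[s,t]$. Since $\mathbb{Z}$ is a unique factorization domain and a polynomial ring over a UFD is again a UFD, the ring $R = \mathbb{Z}[s][t]$ is itself a UFD, so greatest common divisors exist and are well defined up to multiplication by the units $\pm 1$. In particular all the gcd manipulations below — such as $\gcd(a + cb, b) = \gcd(a,b)$, and $\gcd(ab, c) = \gcd(a,c)$ whenever $\gcd(b,c) = 1$ — are valid.

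First I would record two auxiliary facts. The first is an addition formula
$$f_{m+n} = f_{m+1} f_n + t\, f_m f_{n-1},$$
which I would prove either by a direct induction on $n$ (with $m$ fixed) using the recurrence, or more slickly by setting $M = \left(\begin{smallmatrix} s & t \\ 1 & 0\end{smallmatrix}\right)$, checking by induction that $M^n = \left(\begin{smallmatrix} f_{n+1} & t f_n \\ f_n & t f_{n-1}\end{smallmatrix}\right)$, and reading off the $(1,1)$ entry of the identity $M^{m}M^{n} = M^{m+n}$. The second is that consecutive terms are coprime, $\gcd(f_m, f_{m+1}) = 1$. Here the only subtlety is a stray factor of $t$: from $f_{m+1} = s f_m + t f_{m-1}$ one gets $\gcd(f_{m+1}, f_m) = \gcd(t f_{m-1}, f_m)$, and to peel off the $t$ I would observe that $t \nmid f_m$ for $m \ge 1$, since setting $t = 0$ in the recurrence yields $f_m(s,0) = s^{m-1} \neq 0$. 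As $t$ is prime in $R$, this gives $\gcd(t, f_m) = 1$, hence $\gcd(f_{m+1}, f_m) = \gcd(f_{m-1}, f_m)$, and descending induction down to $\gcd(f_1, f_0) = \gcd(1,0) = 1$ closes that argument.

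With these in hand, the core step is the reduction lemma $\gcd(f_{m+n}, f_m) = \gcd(f_n, f_m)$. Reducing the addition formula modulo $f_m$ kills the term $t f_m f_{n-1}$ outright, so $\gcd(f_{m+n}, f_m) = \gcd(f_{m+1} f_n, f_m)$; since $\gcd(f_{m+1}, f_m) = 1$, the factor $f_{m+1}$ drops out and we are left with $\gcd(f_n, f_m)$. Notice that choosing this particular form of the addition formula is what keeps the argument clean: the troublesome factor $t$ rides on the term that vanishes, so no coprimality with $t$ is needed at this stage.

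Finally I would run the Euclidean algorithm on the indices. Taking $m \le n$ without loss of generality and writing $n = qm + r$ with $0 \le r < m$, repeated application of the reduction lemma gives $\gcd(f_n, f_m) = \gcd(f_{n-m}, f_m) = \cdots = \gcd(f_r, f_m)$, which tracks exactly one division step of the ordinary Euclidean algorithm on $(n,m)$. Iterating, the index pairs follow the Euclidean algorithm and terminate at $\gcd(f_{\gcd(m,n)}, f_0) = \gcd(f_{\gcd(m,n)}, 0) = f_{\gcd(m,n)}$, which is the claim. I expect the main obstacle to be essentially bookkeeping: pinning down the precise addition formula and verifying its base cases so that the $t$-factor is routed onto the vanishing term, after which the UFD cancellation and the Euclidean descent go through routinely.
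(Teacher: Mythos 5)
The paper does not contain a proof of this statement: it is quoted as a known result of Hoggatt and Long \cite{HL}, so there is no internal argument to compare yours against. Your proof is correct and is essentially the classical route used for this theorem: the addition formula $f_{m+n} = f_{m+1}f_n + t\,f_m f_{n-1}$, coprimality of consecutive terms, and Euclidean descent on the indices, all carried out in the UFD $\mathbb{Z}[s,t]$. You also handle the one genuinely new wrinkle relative to the integer Fibonacci case correctly — the stray factor of $t$ — both by routing it onto the term that dies modulo $f_m$ in the reduction lemma, and by using that $t$ is prime in $\mathbb{Z}[s,t]$ together with $f_m(s,0) = s^{m-1} \neq 0$ to peel it off in the coprimality step.
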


\begin{thm}\cite{HL}
If $n \ge 2$, $f_n | f_m$ if and only if $n | m$.
\end{thm}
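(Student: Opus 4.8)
The plan is to deduce this statement as a corollary of the immediately preceding gcd theorem, $\gcd(f_n, f_m) = f_{\gcd(n,m)}$, using one additional structural fact about the polynomials $f_n$: their degree in $s$ is strictly increasing. First I would establish, by a one-line induction on the recurrence $f_n = s f_{n-1} + t f_{n-2}$, that $f_n$ is monic in $s$ of degree $n-1$ for every $n \ge 1$. Indeed $\deg_s f_1 = 0$ and $\deg_s f_2 = 1$, and since $\deg_s(s f_{n-1}) = n-1$ dominates $\deg_s(t f_{n-2}) = n-3$, the leading term of $f_n$ is inherited from that of $f_{n-1}$, so the leading coefficient stays $1$. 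Consequently the $f_k$ are pairwise distinct monic polynomials with $\deg_s f_j < \deg_s f_k$ whenever $j < k$; in particular $f_j \mid f_k$ forces $j \le k$ by comparing degrees, and the gcd in $\mathbb{Z}[s,t]$ may be taken to be the monic representative so that the equation $\gcd(f_n,f_m) = f_{\gcd(n,m)}$ holds on the nose, free of unit ambiguity.

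For the forward implication, suppose $n \mid m$. Then $\gcd(n,m) = n$, so the gcd theorem gives $\gcd(f_n, f_m) = f_{\gcd(n,m)} = f_n$. Since any gcd divides each of its arguments, $f_n \mid f_m$, as desired. For the reverse implication, suppose $f_n \mid f_m$. Then $f_n$ is a common divisor of $f_n$ and $f_m$, so $f_n$ divides their gcd, which by the preceding theorem equals $f_{\gcd(n,m)}$. Comparing degrees in $s$ via the monicity fact gives $n - 1 = \deg_s f_n \le \deg_s f_{\gcd(n,m)} = \gcd(n,m) - 1$, hence $n \le \gcd(n,m)$. Since always $\gcd(n,m) \le n$, we conclude $\gcd(n,m) = n$, that is, $n \mid m$. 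The hypothesis $n \ge 2$ simply keeps us away from the degenerate unit $f_1 = 1$ (for which both sides hold trivially anyway).

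I do not expect a serious obstacle here, since the heavy lifting is already done by the cited gcd theorem; the only point demanding care is the passage from an identity of gcds to an identity of indices. That step is exactly where one must leave the formal gcd identity and invoke concrete information about the $f_n$ — namely that distinct indices yield polynomials of distinct degree — so that $f_n \mid f_{\gcd(n,m)}$ can be upgraded to $n \le \gcd(n,m)$. Establishing the monic, strictly increasing degree behavior is therefore the crux, and it reduces to the routine induction sketched above.
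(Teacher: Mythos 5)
Your proof is correct, and there is nothing in the paper to compare it against: the paper states this result (Theorem 6) as a citation to Hoggatt--Long and gives no proof of its own, so any evaluation must be on the merits of your argument alone. On those merits it holds up. The induction showing $f_n$ is monic in $s$ of degree $n-1$ is right ($\deg_s(sf_{n-1}) = n-1$ strictly dominates $\deg_s(tf_{n-2}) = n-3$), the forward direction is an immediate consequence of $\gcd(f_n,f_m) = f_{\gcd(n,m)}$, and the converse correctly uses the fact that in the UFD $\mathbb{Z}[s,t]$ every common divisor divides the gcd, so $f_n \mid f_{\gcd(n,m)}$, after which the degree count forces $\gcd(n,m) = n$. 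You are also right that the hypothesis $n \ge 2$ is not load-bearing at the polynomial level (it matters for the integer specialization in Theorem 2, where $F_2 = 1$ is a unit, but $f_2 = s$ is not a unit in $\mathbb{Z}[s,t]$). For context, the classical Hoggatt--Long treatment establishes the forward implication first, by induction on the addition formula $f_{m+n} = f_{m+1}f_n + t f_m f_{n-1}$, and only then obtains the gcd theorem, with the converse extracted as you do; your presentation inverts that logical order, which is perfectly legitimate here since the paper places the gcd theorem (Theorem 5) before this one, and it buys a shorter argument whose only new ingredient is the degree monotonicity of the $f_k$.
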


\begin{thm}\cite{HL}
$f_n(s, t)$ is irreducible over $\mathbb{Q}$ if and only if $n$ is prime.
\end{thm}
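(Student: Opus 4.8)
The plan is to prove the two directions separately, with the forward direction (primality implies irreducibility) carrying essentially all of the difficulty. For the easy direction I would argue the contrapositive: if $n$ is composite, write $n = ab$ with $2 \le a < n$. By the divisibility theorem already recorded above ($f_a \mid f_n$ whenever $a \mid n$), $f_a$ is a factor of $f_n$ over $\mathbb{Z}[s,t]$. Since $\deg_s f_m = m - 1$, the factor $f_a$ has $s$-degree $a - 1 \ge 1$ while the cofactor $f_n/f_a$ has $s$-degree $(n-1)-(a-1) = n - a \ge 1$; neither is a unit, so $f_n$ is reducible. This settles ``irreducible $\Rightarrow$ prime.''

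For the main direction, fix a prime $p$ (the case $p = 2$ is immediate, as $f_2 = s$). Since $f_p$ is monic in $s$ over $\mathbb{Z}[t]$, Gauss's lemma lets me replace irreducibility over $\mathbb{Q}$ by irreducibility of $f_p$ as a polynomial in $s$ over the field $\mathbb{Q}(t)$. The first substantive step is to pin down the roots: from the Binet form $f_n = (\alpha^n - \beta^n)/(\alpha - \beta)$, where $\alpha, \beta$ are the roots of $z^2 - sz - t$, a root of $f_p$ in $s$ occurs exactly when $(\alpha/\beta)^p = 1$ with $\alpha \ne \beta$, and solving gives the $p - 1$ roots $s_k = 2\sqrt{-t}\,\cos(k\pi/p)$ for $k = 1, \dots, p-1$. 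Pairing $k$ with $p - k$ and using $\cos((p-k)\pi/p) = -\cos(k\pi/p)$ then produces the factorization
\[
f_p(s,t) = \prod_{k=1}^{(p-1)/2}\bigl(s^2 + c_k t\bigr), \qquad c_k := 4\cos^2(k\pi/p) = 2 + \zeta^k + \zeta^{-k},
\]
where $\zeta = e^{2\pi i/p}$; each quadratic has coefficients in the maximal real subfield $L = \mathbb{Q}(\zeta + \zeta^{-1})$ and is irreducible over $L(t)$ because $-c_k t$ has a simple zero at $t = 0$ and hence is not a square there.

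The crux is a transitivity statement. The $c_k$ are precisely the translates $2 + (\zeta^k + \zeta^{-k})$ of the conjugates of $\zeta + \zeta^{-1}$; because $p$ is prime, $\Phi_p$ is irreducible and $\mathrm{Gal}(\mathbb{Q}(\zeta)/\mathbb{Q}) \cong (\mathbb{Z}/p)^{\times}$, so $\mathrm{Gal}(L/\mathbb{Q}) \cong (\mathbb{Z}/p)^{\times}/\{\pm 1\}$ acts transitively on $\{c_1, \dots, c_{(p-1)/2}\}$. I would then run a Galois-descent argument: working over $L(t)$, where $f_p = \prod_k (s^2 + c_k t)$ is its factorization into pairwise non-associate irreducibles, any factorization $f_p = AB$ with $A, B \in \mathbb{Q}(t)[s]$ (both taken monic) forces $A$ to be the product of some subset $S$ of the quadratics; applying $G = \mathrm{Gal}(L/\mathbb{Q})$, acting on $L$ and fixing $t$ and $s$, fixes $A$ while permuting the quadratics according to its transitive action on the $c_k$, so $S$ is $G$-stable and hence empty or everything. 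Thus $f_p$ is irreducible over $\mathbb{Q}(t)$, and by Gauss over $\mathbb{Q}$.

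I expect the main obstacle to be the cyclotomic input, namely establishing that the $c_k$ form a single Galois orbit, since this is exactly where primality is indispensable: for composite $n$ the roots $\zeta_n^k$ break into several cyclotomic orbits, the analogue of the $c_k$ splits accordingly, and transitivity (hence irreducibility) fails, matching the easy direction. A secondary technical point is the bookkeeping justifying the descent, namely that the quadratics $s^2 + c_k t$ are pairwise non-associate and genuinely irreducible over $L(t)$, so that unique factorization in $L(t)[s]$ applies and the induced permutation action on the factors is well defined.
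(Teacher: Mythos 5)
Your proof is correct, but there is a wrinkle in the comparison: the paper does not prove this statement at all. It appears as Theorem 7, quoted from Hoggatt and Long \cite{HL}, so the only thing to measure your argument against is the cited source rather than anything internal to the paper. On its own terms your argument is complete: the easy direction via $f_a \mid f_n$ and counting $s$-degrees is fine; and in the hard direction, the Gauss's-lemma reduction to $\mathbb{Q}(t)[s]$, the factorization $f_p(s,t) = \prod_{k=1}^{(p-1)/2}(s^2 + c_k t)$ with $c_k = 4\cos^2(k\pi/p) = 2 + \zeta^k + \zeta^{-k}$ (which checks out, e.g.\ $f_3 = s^2+t$ and $f_5 = (s^2+c_1t)(s^2+c_2t) = s^4+3s^2t+t^2$, and follows from the Binet form because the $p-1$ roots $2\sqrt{-t}\cos(k\pi/p)$ are distinct and match the degree), the irreducibility of each quadratic over $L(t)$ via the odd vanishing order of $c_kt$ at $t=0$, the transitivity of $\mathrm{Gal}(L/\mathbb{Q}) \cong (\mathbb{Z}/p)^{\times}/\{\pm 1\}$ on the $c_k$, and the descent through unique factorization in $L(t)[s]$ are all sound, with $p=2$ correctly split off. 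This is the same circle of ideas as the classical Webb--Parberry/Hoggatt--Long proof, which also starts from the trigonometric factorization; the difference is in the endgame. The classical argument observes that the single root $2\sqrt{-t}\cos(\pi/p)$ (equivalently $2i\cos(\pi/p)$ in the one-variable specialization) has degree $p-1$, essentially because $-4\cos^2(\pi/p)$ is negative and hence not a square in the totally real field $L$, so the monic degree-$(p-1)$ polynomial $f_p$ must be its minimal polynomial. Your Galois-orbit version proves the same transitivity statement but applies it to the set of quadratic factors rather than to one root; it costs you the bookkeeping about non-associate irreducible factors (which you correctly flag), and it buys a cleaner explanation of exactly where primality enters, since for composite $n$ the orbit of the $c_k$ under the cyclotomic Galois group visibly breaks apart, matching the failure of irreducibility from the easy direction.
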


\begin{thm}\cite{HL}
Fix $s, t \in \mathbb{Z}$ and let $r$ be a positive integer with gcd($r$, $t$) = 1. Then there exists $m$ such that $r | f_m$. If there is a  least positive integer $m$ such that $r | f_m$, then $r | f_n$ if and only if $m | n$. 
\end{thm}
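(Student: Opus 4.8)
The plan is to prove the final theorem, which asserts a "rank of apparition" property for generalized Fibonacci sequences: fixing integers $s, t$ with $\gcd(r, t) = 1$, there exists $m$ with $r \mid f_m$, and the set of indices $n$ with $r \mid f_n$ is exactly the set of multiples of the least such $m$.

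Let me think about this carefully.

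First, the existence of $m$. The idea is to look at the sequence of pairs $(f_n, f_{n+1})$ modulo $r$. Since each term is determined by the previous two via the recurrence, the sequence of pairs $(f_n \bmod r, f_{n+1} \bmod r)$ takes values in $(\mathbb{Z}/r\mathbb{Z})^2$, which is finite. So by pigeonhole, some pair repeats: there exist $i < j$ with $(f_i, f_{i+1}) \equiv (f_j, f_{j+1}) \pmod r$.

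Now the key point: the condition $\gcd(r, t) = 1$ lets us run the recurrence backward. The recurrence $f_n = s f_{n-1} + t f_{n-2}$ gives $t f_{n-2} = f_n - s f_{n-1}$, so $f_{n-2} = t^{-1}(f_n - s f_{n-1}) \pmod r$, where $t^{-1}$ exists mod $r$ precisely because $\gcd(r,t) = 1$. So the map sending $(f_{n-1}, f_n) \mapsto (f_n, f_{n+1})$ is invertible modulo $r$; the sequence of pairs is purely periodic (not just eventually periodic). Therefore the pair $(f_0, f_1) = (0, 1)$ recurs. So there's a period $m$ — actually the period of the pair sequence — with $(f_m, f_{m+1}) \equiv (0, 1) \pmod r$, giving $r \mid f_m$. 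This establishes existence.

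Now for the divisibility characterization. Let $m$ be the least positive integer with $r \mid f_m$ (the "rank of apparition"). One direction is easy: if $m \mid n$, write $n = mk$ and use Theorem 6 (the $f_n \mid f_m$ iff $n \mid m$ divisibility result). Wait — I need to be careful: Theorem 6 is stated over $\mathbb{Z}[s,t]$, so $f_m(s,t) \mid f_{mk}(s,t)$ as polynomials, hence specializing $s, t$ to integers gives $f_m \mid f_{mk}$ in $\mathbb{Z}$, so $r \mid f_m \mid f_n$. That direction is clean.

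The converse — if $r \mid f_n$ then $m \mid n$ — is the main obstacle. The plan here is to use the periodicity established above together with a gcd-type identity. Concretely, I would write $n = qm + \rho$ with $0 \le \rho < m$ and aim to show $r \mid f_\rho$, which forces $\rho = 0$ by minimality of $m$. The cleanest route is via the addition formula $f_{a+b} = f_{a+1} f_b + t\, f_a f_{b-1}$ (which follows from the recurrence and can be proved by induction on $b$). Applying it with $a = qm$ and using $r \mid f_{qm}$ (from the easy direction), one gets $f_n = f_{qm + \rho} \equiv f_{qm+1} f_\rho \pmod r$. The factor $f_{qm+1}$ must be a unit mod $r$: since $\gcd(f_{qm}, f_{qm+1}) = 1$ over $\mathbb{Z}[s,t]$ by Theorem 5, and more relevantly since $\gcd(f_k, t) = 1$ combined with the structure forces $\gcd(f_{qm+1}, r) = 1$. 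I expect the cleanest justification of this coprimality to be the delicate step: one shows that $f_{n+1}^2 - s f_n f_{n+1} - t f_n^2 = (-t)^n$ (a Cassini/norm-type identity), so modulo $r$, $f_{n+1}^2 \equiv (-t)^n$ when $r \mid f_n$, and since $\gcd(t, r) = 1$ this makes $f_{n+1}$ a unit mod $r$. Thus from $f_n \equiv f_{qm+1} f_\rho \pmod r$ with $f_{qm+1}$ a unit, $r \mid f_n$ gives $r \mid f_\rho$, whence $\rho = 0$ and $m \mid n$.

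The main obstacle I anticipate is handling the invertibility of $f_{qm+1}$ modulo $r$ carefully; this is exactly where the hypothesis $\gcd(r, t) = 1$ is essential, and I would isolate it as a preliminary lemma (the norm identity $f_{n+1}^2 - s f_n f_{n+1} - t f_n^2 = (-t)^n$, provable by a one-line induction). With that lemma in hand, both the existence (via invertible backward recurrence forcing pure periodicity) and the "only if" direction (via the addition formula plus unit cancellation) follow smoothly, and the "if" direction is immediate from the polynomial divisibility Theorem 6.
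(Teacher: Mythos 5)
Your proof is correct, but note that the paper itself offers no proof of this statement to compare against: it is Theorem 8 in the paper, quoted as background from Hoggatt and Long \cite{HL}, and the paper only uses it (e.g., in the proof of Theorem 9) rather than proving it. On its own merits your argument is sound and complete. The existence step (pigeonhole on pairs $(f_n, f_{n+1}) \bmod r$ plus invertibility of the backward recurrence, which is exactly where $\gcd(r,t)=1$ enters, forcing pure periodicity and hence the recurrence of the pair $(0,1)$) is the standard and correct argument. The ``if'' direction via specializing the polynomial divisibility $f_m(s,t) \mid f_{mk}(s,t)$ of Theorem 6 is clean. For the ``only if'' direction, your two identities both check out: the addition formula $f_{a+b} = f_{a+1}f_b + t\,f_a f_{b-1}$ and the norm identity $f_{n+1}^2 - s f_n f_{n+1} - t f_n^2 = (-t)^n$ (both verifiable by short inductions), and together with $\gcd(r,t)=1$ they show $f_{qm+1}$ is a unit mod $r$, so $r \mid f_{qm+\rho}$ forces $r \mid f_\rho$ and minimality gives $\rho = 0$. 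You were also right to flag that appealing to $\gcd(f_n, f_{n+1}) = 1$ in $\mathbb{Z}[s,t]$ (Theorem 5) would not suffice by itself --- coprimality of polynomials does not survive specialization of $s,t$ to integers --- and the norm identity is precisely the correct repair, since it exhibits an integer combination of $f_{qm+1}^2$ and $f_{qm}$ equal to the unit $(-t)^{qm}$. One trivial edge case you leave implicit ($q = 0$, where $f_{qm} = f_0 = 0$ and $f_{qm+1} = 1$) goes through the same computation without change.
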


The remainder of this paper will mainly deal with generalized Fibonacci sequences, which are the sequences obtained after fixing $s, t \in \mathbb{Z}$ in the generalized Fibonacci polynomials. In the next section, we will consider the periodicity of generalized Fibonacci sequences modulo $n$ in relation to a generalization of $p$-adic valuations. In Section 3, we consider an analogue of the Riemann zeta function for generalized Fibonacci sequences. Finally, we will examine the rank modulo $n$ in more depth and find an interpretation as the order of an element in a finite field.

\section{$d$-adic valuations of generalized Fibonacci sequences}
One property of the Fibonacci sequence which carries over to generalized Fibonacci sequences is periodicity modulo $n$ for any $n \ge 2$. This property also holds holds for generalized Fibonacci sequences since there are finitely many choices for pairs of consecutive terms modulo $n$. According to \cite{War}, many arithmetic properties of generalized Fibonacci sequences ultimately depend on this periodicity property and the divisibility property which was shown in Theorem 2 and Theorem 6. We now formally define the period modulo $m$.

\begin{defn}
The $\emph{period}$ modulo $m$ is defined to be the smallest positive integer $k(m)$ such that $f_n(s, t) \equiv f_{n + k(m)}(s, t)$ (mod $m$) for all sufficiently large $n$. 
\end{defn} 

There is a quantity similar to the period related to divisibility modulo $m$.

\begin{defn}
If there exists $r(m)$ such that $f_y(s,t) \equiv 0$ (mod $m$) if and only if $r(m) | y$, $r(m)$ is defined as the $\emph{rank}$ modulo $m$. 
\end{defn}

For example, we can consider the Fibonacci sequence modulo 8. Here are the first few terms of the residues modulo 8: $0, 1, 1, 2, 3, 5, 0, 5, 5, 2, 7, 1, 0, 1, 1, \ldots$ In this case, the period $k(8) = 12$, while the rank $r(8) = 6$ \cite{Ro}. \\

We now show that the rank exists modulo a prime $p$ if and only if $p \nmid t$.

\begin{thm}
Let $p$ be a prime. Then, $r(p)$ exists if and only if $p \nmid t$.
\end{thm}

\begin{proof}
By Theorem 8, there is a positive integer $m$ such that $p | f_m$ if $p \nmid t$. This means that it suffices to show that it is not possible to obtain a sequence of the form $x, 0, 0, \ldots$ modulo $p$ with $x \ne 0$ in this case. Using the recurrence, we find that this is only possible if there is a $t$ such that $tx \equiv 0$ (mod $p$). If $p | s$ and $p | t$, then this is true for any $x$ and $f_n \equiv 0$ (mod $p$) for all $n \ge m$, so $r(p)$ does not exist. If $p \nmid s$ and $p | t$, the sequence would be of the form $0, 1, s, s^2, s^3, \ldots$ mod $p$ and there would be no positive integer $m$ such that $p | f_m$. If $p \nmid t$, this statement implies that $x \equiv 0$ (mod $p$). However, this contradicts the assumption that $x \not\equiv 0$ (mod $p$). So, the pair $(f_m, f_{m + 1})$ must be of the form $(0, a)$ modulo $p$, where $a \ne 0$ and the sequence up to $f_{m - 1}$ is multiplied by $a$ mod $p$. Then, $r(p)$ is equal to the smallest possible value of $m$.
\end{proof}

Arguments similar to the one used above can be used to show that $r(p^k)$ exists if and only if $r(p)$ exists as $p^k$ is relatively prime to $t$ if and only if $p \nmid t$. We will mainly focus on the case where $t = -1$ since $r(m)$ exists for any $m \ge 2$ and many properties of the original Fibonacci sequence carry over to this case. \\

Here are some results on the rank of generalized Fibonacci sequences modulo $p$, where $p$ is an odd prime.

\begin{thm}[Li] \cite{L}
Let $p$ be an odd prime and $r(p)$ be the rank of the generalized Fibonacci sequence with parameters $(s, t)$ mod $p$. Let $D = s^2 + 4t$. If $p | D$, we have that $r(p) = p$.
\end{thm}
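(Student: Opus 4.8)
The plan is to exploit the fact that the hypothesis $p \mid D$ means the characteristic polynomial $x^2 - sx - t$ of the recurrence has a repeated root modulo $p$. First I would record that, since we are in the regime where $r(p)$ is defined, Theorem 9 forces $p \nmid t$; combined with $p \mid D = s^2 + 4t$ this rules out $p \mid s$, because $p \mid s$ together with $p \mid D$ would give $p \mid 4t$ and hence, as $p$ is odd, $p \mid t$. Thus $s$ is invertible mod $p$, and because $p$ is odd so is $2$; I may therefore set $\alpha \equiv s/2 \pmod p$, a nonzero residue. The congruence $D \equiv 0$ then reads $t \equiv -s^2/4 \equiv -\alpha^2$ and $s \equiv 2\alpha$, so the recurrence becomes $f_n \equiv 2\alpha f_{n-1} - \alpha^2 f_{n-2} \pmod p$.

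Next I would establish the closed form $f_n \equiv n\,\alpha^{n-1} \pmod p$ by induction on $n$. The base cases $f_0 \equiv 0$ and $f_1 \equiv 1$ are immediate, and the inductive step is a one-line computation: substituting $f_{n-1} \equiv (n-1)\alpha^{n-2}$ and $f_{n-2} \equiv (n-2)\alpha^{n-3}$ into $f_n \equiv 2\alpha f_{n-1} - \alpha^2 f_{n-2}$ collapses to $\bigl(2(n-1) - (n-2)\bigr)\alpha^{n-1} = n\,\alpha^{n-1}$. Equivalently, one can invoke the general theory of linear recurrences with a repeated characteristic root, which gives $f_n = (A + Bn)\alpha^n$, and then solve for $A$ and $B$ using the initial conditions $f_0 = 0$ and $f_1 = 1$.

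Finally, since $\alpha$ is a unit mod $p$, the factor $\alpha^{n-1}$ never vanishes, so $f_n \equiv 0 \pmod p$ if and only if $n \equiv 0 \pmod p$. This is exactly the statement that the rank is $r(p) = p$.

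The only real subtlety, and the step I would watch most carefully, is the bookkeeping that guarantees $\alpha \not\equiv 0$, i.e.\ that $p \nmid s$; without this the closed form degenerates and the conclusion can fail. This is where the oddness of $p$ and the existence hypothesis on $r(p)$ (via $p \nmid t$) are both essential, so I would state that reduction explicitly before carrying out any computation.
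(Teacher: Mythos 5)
Your proof is correct, but there is nothing in the paper to compare it against: the paper states this result as Theorem 10 with a citation to Li \cite{L} and gives no proof of its own. Judged on its own merits, your argument is the standard, self-contained one for this fact. The hypothesis $p \mid D$ makes the characteristic polynomial $x^2 - sx - t$ a perfect square $(x - \alpha)^2$ over $\mathbb{F}_p$ with $\alpha \equiv s/2 \pmod{p}$, and the closed form $f_n \equiv n\alpha^{n-1} \pmod{p}$ shows the zero indices are exactly the multiples of $p$, which is precisely the paper's definition of $r(p) = p$. Your preliminary reduction is also the right place to be careful: deducing $p \nmid t$ from the existence of $r(p)$ via Theorem 9, and then $p \nmid s$ from $p \mid D$ and the oddness of $p$, is what keeps $\alpha$ a unit and prevents the closed form from degenerating (indeed, when $p \mid s$ and $p \mid t$ the sequence is eventually zero and no rank exists). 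It is worth noting that your argument is the natural companion to the paper's Theorem 19, which handles the complementary case $p \nmid D$ by diagonalizing the recurrence matrix $U$ over $\mathbb{F}_p$ or $\mathbb{F}_{p^2}$; in your case $U$ has a repeated eigenvalue and is not diagonalizable, and the Jordan-block computation of $U^n$ reproduces exactly your formula $f_n \equiv n\alpha^{n-1}$. Together the two arguments cover all odd primes with $p \nmid t$.
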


\begin{thm} \cite{L}
If $p$ is an odd prime such that $p \nmid D$, then $(-t/p) = 1$ if and only if $r(p) | \frac{p - (D/p)}{2}$. 
\end{thm}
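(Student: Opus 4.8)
The plan is to pass to the roots of the characteristic polynomial and reinterpret the rank as a multiplicative order. Let $\alpha, \beta$ be the roots of $x^2 - sx - t = 0$, so that $\alpha + \beta = s$, $\alpha\beta = -t$, and $(\alpha - \beta)^2 = s^2 + 4t = D$. Since $p \nmid D$ we have $\alpha \neq \beta$, and since we may assume $p \nmid t$ (otherwise $(-t/p) = 0$ and $r(p)$ need not exist), both roots are nonzero. The hypothesis $p \nmid D$ splits into two cases according to $(D/p)$: if $(D/p) = 1$ then $\sqrt{D} \in \mathbb{F}_p$ and $\alpha, \beta \in \mathbb{F}_p^{*}$, while if $(D/p) = -1$ then $\alpha, \beta$ lie in $\mathbb{F}_{p^2} \setminus \mathbb{F}_p$ and are interchanged by Frobenius, so $\beta = \alpha^p$.

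Next I would use Binet's formula $f_n = (\alpha^n - \beta^n)/(\alpha - \beta)$, valid modulo $p$ because $\alpha \neq \beta$, to rewrite the divisibility condition. Setting $\gamma = \alpha/\beta$ (well defined since $\beta \neq 0$), we have $f_n \equiv 0 \pmod{p}$ iff $\alpha^n = \beta^n$ iff $\gamma^n = 1$. Hence $r(p)$ is exactly the multiplicative order of $\gamma$, and the assertion $r(p) \mid \frac{p - (D/p)}{2}$ is equivalent to $\gamma^{(p - (D/p))/2} = 1$. The entire theorem therefore reduces to the single identity $\gamma^{(p - (D/p))/2} = (-t/p)$, where the right-hand side is read as an element of $\{\pm 1\}$; this equals $1$ precisely when $(-t/p) = 1$.

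To establish this identity I would treat the two cases via the norm and Euler's criterion. When $(D/p) = 1$, writing $\gamma = \alpha^2/(\alpha\beta) = \alpha^2/(-t)$ and using $\alpha^{p-1} = 1$ gives $\gamma^{(p-1)/2} = \alpha^{p-1}(-t)^{-(p-1)/2} = (-t)^{-(p-1)/2} = (-t/p)$, the last step by Euler's criterion and the fact that $(-t/p)$ is its own inverse. When $(D/p) = -1$, the norm from $\mathbb{F}_{p^2}$ to $\mathbb{F}_p$ yields $\alpha^{p+1} = \alpha\beta = -t$, so $\alpha^{(p^2-1)/2} = (\alpha^{p+1})^{(p-1)/2} = (-t)^{(p-1)/2} = (-t/p)$; since $\gamma = \alpha^{1-p}$, we get $\gamma^{(p+1)/2} = \alpha^{(1-p^2)/2} = (\alpha^{(p^2-1)/2})^{-1} = (-t/p)$. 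In both cases $\gamma^{(p - (D/p))/2} = (-t/p)$, which completes the equivalence.

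The conceptual heart—and the step I expect to require the most care—is the case $(D/p) = -1$, where one must pass to $\mathbb{F}_{p^2}$, exploit the Frobenius relation $\beta = \alpha^p$, and recognize $\alpha\beta$ as the norm $\alpha^{p+1}$. The identification of $r(p)$ with $\mathrm{ord}(\gamma)$ and the application of Euler's criterion are then routine; the only bookkeeping is to confirm that $(p \pm 1)/2$ are integers (true since $p$ is odd) and that all relevant quantities collapse into $\{\pm 1\}$, so that inverting them is harmless.
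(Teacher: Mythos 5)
Your proof is correct, but there is nothing in the paper to compare it against: the paper quotes this statement from Li \cite{L} as background and gives no proof of it. Your derivation is self-contained and sound. Since $p \nmid tD$, Binet's formula is valid over $\mathbb{F}_p$ (split case) or $\mathbb{F}_{p^2}$ (inert case), the rank $r(p)$ is exactly the multiplicative order of $\gamma = \alpha/\beta$, and the theorem collapses to the single identity $\gamma^{(p-(D/p))/2} = (-t/p)$, which you verify via Fermat's little theorem and Euler's criterion when $(D/p)=1$ and via the norm identity $\alpha^{p+1} = \alpha\beta = -t$ together with Frobenius when $(D/p)=-1$. This is in fact slightly stronger than the stated equivalence, since it pins down the exact value of $\gamma^{(p-(D/p))/2}$ rather than only deciding when it equals $1$. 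Note also that your method is precisely the splitting-field technique the paper itself introduces later, in Theorem 19, for the special case $t=-1$: there the rank is expressed through $\mathrm{ord}(X)$ with $XY=1$, and your $\gamma$ specializes to $X/Y = X^2$, recovering the paper's dichotomy between $\mathrm{ord}(X)$ even and odd. So your argument both supplies the proof the paper omits and shows that the Section 4 machinery extends from $t=-1$ to arbitrary $t$ with $p \nmid t$. Finally, you were right to flag the boundary case: the statement implicitly presupposes that $r(p)$ exists, which by the paper's Theorem 9 is equivalent to $p \nmid t$, consistent with excluding $(-t/p)=0$.
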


\begin{rem}
Note that we always have $(-t/p) = 1$ when $t = -1$. By Theorem 10 and Theorem 11, we have that $r(p) \le p$ for all odd primes $p$. 
\end{rem}

Another property of the Fibonacci sequence which carries over to generalized Fibonacci sequences with $t = -1$ is that we can relate periods of generalized Fibonacci sequences mod $p$ to $p$-adic valuations of their elements. This is one example of the relationship of periodicity modulo $n$ to other arithmetic properties of generalized Fibonacci sequences. Here are some results which have similar counterparts for the Fibonacci sequence (see \cite{Le}, \cite{Wl}, \cite{K}, and \cite{Le} for Lemmas 1, 2, 3, and 4 respectively). We use $g_n(s,t)$ to denote the generalized Lucas sequences.

\begin{lemma}
$\text{gcd}(f_n(s, -1), g_n(s, -1)) \le 2$.
\end{lemma}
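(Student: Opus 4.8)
The plan is to reduce the gcd bound to a single algebraic identity relating $f_n$ and $g_n$ and then read off the divisibility. Recall that with $t = -1$ the generalized Lucas sequence is given by $g_0(s,-1) = 2$, $g_1(s,-1) = s$, satisfying the same recurrence $g_n = s g_{n-1} - g_{n-2}$. The key identity I would establish is
\[
g_n(s,-1)^2 - (s^2 - 4)\, f_n(s,-1)^2 = 4,
\]
that is, $g_n^2 - D f_n^2 = 4$ with $D = s^2 + 4t = s^2 - 4$ as in Theorem 10.

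To prove the identity, I would use the Binet-type closed forms. Let $\alpha, \beta$ be the roots of $x^2 - sx + 1 = 0$, so that $\alpha + \beta = s$ and $\alpha\beta = 1$. When $\alpha \ne \beta$ one has $f_n = (\alpha^n - \beta^n)/(\alpha - \beta)$ and $g_n = \alpha^n + \beta^n$, whence
\[
g_n^2 - (\alpha - \beta)^2 f_n^2 = (\alpha^n + \beta^n)^2 - (\alpha^n - \beta^n)^2 = 4(\alpha\beta)^n = 4,
\]
and $(\alpha - \beta)^2 = (\alpha + \beta)^2 - 4\alpha\beta = s^2 - 4 = D$. Alternatively, to sidestep the degenerate case $\alpha = \beta$ (that is, $s = \pm 2$ and $D = 0$), I could verify the identity directly by induction on $n$: it holds for $n = 0, 1$, and the inductive step follows from substituting the recurrences for $f_n$ and $g_n$ into $g_{n}^2 - D f_{n}^2$ and simplifying against the previous two cases.

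With the identity in hand, the bound is immediate. Let $d = \gcd(f_n, g_n)$. Since $d \mid f_n$ and $d \mid g_n$, we have $d^2 \mid g_n^2$ and $d^2 \mid (s^2 - 4) f_n^2$, so $d^2$ divides their difference, which is $4$. Hence $d^2 \mid 4$, forcing $d \mid 2$ and therefore $\gcd(f_n(s,-1), g_n(s,-1)) \le 2$.

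I expect the only real work to be the identity itself; the divisibility conclusion is then a one-line consequence. The main subtlety to handle carefully is the degenerate discriminant case $D = 0$, where the Binet formula requires the confluent limit $f_n = n\alpha^{n-1}$. Since the inductive proof of the identity avoids this issue entirely and is uniform in $s$, I would likely present the induction rather than the root-based computation, using the Binet form only as motivation.
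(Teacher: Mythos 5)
Your proof is correct, but it takes a genuinely different route from the paper. The paper's proof is a four-line Euclidean computation: it writes $g_n(s,-1) = f_{n+1}(s,-1) - f_{n-1}(s,-1)$ (Proposition 2.3 of the ACMS paper with $t=-1$), substitutes $f_{n+1} = sf_n - f_{n-1}$ to get $\gcd(f_n, g_n) = \gcd(f_n, sf_n - 2f_{n-1}) = \gcd(f_n, 2f_{n-1})$, and then invokes $\gcd(f_n, f_{n-1}) = 1$ to conclude $\gcd(f_n, g_n) = \gcd(f_n, 2) \le 2$. Your route instead goes through the Pell-type norm identity $g_n^2 - (s^2-4)f_n^2 = 4$, from which any common divisor $d$ satisfies $d^2 \mid 4$. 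The paper's argument is more elementary (only linear identities and gcd manipulation, with coprimality of consecutive terms doing the real work), while your identity is a standard Lucas-sequence fact that is self-contained and actually delivers slightly more: it directly shows that an odd prime dividing $f_n$ cannot divide $g_n$, which is precisely how Lemma 1 gets used later in Lemma 4, and it generalizes to arbitrary $t$ as $g_n^2 - Df_n^2 = 4(-t)^n$. One caveat on your fallback induction: the step is not quite "substitute and simplify against the previous two cases," since expanding $g_n^2 - Df_n^2$ via the recurrences produces the cross term $g_{n-1}g_{n-2} - Df_{n-1}f_{n-2}$, which is not one of your inductive hypotheses; you would need to carry the companion identity $g_{n-1}g_{n-2} - Df_{n-1}f_{n-2} = 2s$ along in a simultaneous induction. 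Alternatively, skip the induction entirely: the Binet computation covers $s \ne \pm 2$, and the degenerate cases $s = \pm 2$ are trivial to check by hand (there $f_n = \pm n$ and $g_n = \pm 2$, so the identity reads $4 - 0 = 4$).
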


\begin{proof}
\begin{align*}
\text{gcd}(f_n(s, -1), g_n(s, -1)) &= \text{gcd}(f_n(s, -1), f_{n + 1}(s, -1) - f_{n - 1}(s, -1)) \text{ (Proposition 2.3 of \cite{ACMS})} \\
&= \text{gcd}(f_n(s, -1), sf_n(s, -1) - 2f_{n - 1}(s, -1)) \\
&= \text{gcd}(f_n(s, -1), - 2f_{n - 1}(s, -1)) \\
&= \text{gcd}(f_n(s, -1), 2) \text{ (follows by induction)}
\end{align*}
\end{proof}

\begin{lemma}
$f_{kn}(s, -1) = 2^{1 - k} f_n(s, -1)(K f_n(s, -1)^2 + k g_n(s, -1)^{k - 1})$, where $K \in \mathbb{Z}$.
\end{lemma}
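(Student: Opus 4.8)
The plan is to use the Binet-type closed forms for the generalized Fibonacci and Lucas sequences and reduce the identity to a single binomial expansion. Since $t = -1$, the recurrence is $f_n = sf_{n-1} - f_{n-2}$, whose characteristic polynomial $x^2 - sx + 1$ has roots $\alpha, \beta$ with $\alpha + \beta = s$ and $\alpha\beta = 1$. I would first record the standard facts $f_n(s,-1) = (\alpha^n - \beta^n)/(\alpha - \beta)$ and $g_n(s,-1) = \alpha^n + \beta^n$, together with $(\alpha - \beta)^2 = (\alpha + \beta)^2 - 4\alpha\beta = s^2 - 4$ (this is the discriminant $D = s^2 + 4t$ specialized at $t = -1$). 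Writing $u = \alpha^n$ and $v = \beta^n$, I then have $g_n = u + v$, $(\alpha - \beta)f_n = u - v$, and $uv = (\alpha\beta)^n = 1$. The key preliminary observation is that the common factor $\alpha-\beta$ cancels, so that $f_{kn}/f_n = (u^k - v^k)/(u - v)$.

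Next I would substitute $u = (g_n + w)/2$ and $v = (g_n - w)/2$ with $w := u - v$, and expand by the binomial theorem. The even-indexed terms cancel in $(g_n + w)^k - (g_n - w)^k$, leaving only odd powers of $w$, so that
\[
u^k - v^k = 2^{1-k}\sum_{m \ge 0} \binom{k}{2m+1} g_n^{k-1-2m} w^{2m+1}.
\]
Dividing by $w = u - v$ gives
\[
\frac{f_{kn}}{f_n} = 2^{1-k}\sum_{m \ge 0}\binom{k}{2m+1} g_n^{k-1-2m} w^{2m}.
\]
The reason for isolating even powers of $w$ is that $w^2 = (u-v)^2 = (\alpha - \beta)^2 f_n^2 = (s^2 - 4)f_n^2$, so every power $w^{2m} = (s^2 - 4)^m f_n^{2m}$ is an explicit integer multiple of $f_n^{2m}$.

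Finally I would separate the $m = 0$ term, which contributes $\binom{k}{1} g_n^{k-1} = k g_n^{k-1}$, from the remaining terms, each of which (for $m \ge 1$) carries a factor $f_n^{2m}$ and hence is divisible by $f_n^2$. Setting
\[
K = \sum_{m \ge 1}\binom{k}{2m+1}(s^2 - 4)^m g_n^{k-1-2m} f_n^{2m-2},
\]
I obtain $f_{kn}/f_n = 2^{1-k}(k g_n^{k-1} + Kf_n^2)$, which rearranges to the claimed identity after multiplying through by $f_n$.

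The main thing to verify, and the only real content beyond bookkeeping, is that $K \in \mathbb{Z}$. This is exactly where the hypothesis $t = -1$ does the work: because $\alpha\beta = 1$, the factor $w^2$ equals precisely $(s^2 - 4)f_n^2$ with no denominators, so each exponent $2m - 2$ (for $m \ge 1$) is a nonnegative integer and $K$ is a genuine polynomial in $s$, $f_n$, $g_n$ with integer coefficients (recall $f_n, g_n \in \mathbb{Z}$ for $s \in \mathbb{Z}$). I would also remark that since both sides of the final identity are polynomials in $s$, the derivation through $\alpha, \beta$ remains valid even in the degenerate case $s = \pm 2$ where $\alpha = \beta$: the computation is an identity in $\mathbb{Z}[s]$ (after clearing the factor $2^{1-k}$), so specializing to any integer $s$ is legitimate.
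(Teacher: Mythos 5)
Your proposal is correct and follows essentially the same route as the paper's proof: both expand $X^{kn}-Y^{kn}$ (your $u^k - v^k$) via the substitution $X^n = \tfrac{1}{2}\bigl((X-Y)f_n + g_n\bigr)$, cancel the even binomial terms, isolate the $j=1$ term $k g_n^{k-1}$, and observe that the remaining odd terms carry even powers of $X-Y$, hence integer factors $(s^2-4)^m f_n^{2m}$ with $m \ge 1$. Your explicit formula for $K$ and the remark about the degenerate case $s = \pm 2$ are slightly more careful than the paper's version, but the argument is the same.
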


\begin{proof}
Consider the binomial expansion of $f_{kn}(s, -1) = \frac{X^{kn} - Y^{kn}}{X - Y}$, where $X = \frac{s + \sqrt{s^2 - 4}}{2}$ and $Y = \frac{s - \sqrt{s^2 - 4}}{2}$.

\begin{align*}
f_{kn}(s, -1) &= \frac{X^{kn} - Y^{kn}}{X - Y} \\
&= \frac{1}{X - Y} (\frac{1}{2^k}((X - Y)f_n(s, -1) + g_n(s, -1))^k - \frac{1}{2^k}(-(X - Y)f_n(s, -1) + g_n(s, -1))^k) \\
&= 2^{1 - k} \sum_{j \text{ odd}} \binom{k}{j} (X - Y)^{j - 1} f_n(s, -1)^j g_n(s, -1)^{k - j} \\
&= 2^{1 - k} f_n(s, -1) (K f_n(s, -1)^2 + k g_n(s, -1)^{k - 1}) \text{ } (\text{where $K \in \mathbb{Z}$})
\end{align*}

\end{proof}

\begin{lemma}
Let $k \in \mathbb{N}$. We have $f_{kn}(s, -1) \equiv kf_{n + 1}(s, -1)^{k - 1} f_n(s, -1)$ (mod $f_n(s, -1)^2$) and $f_{kn + 1}(s, -1) \equiv f_{n + 1}(s, -1)^k$  (mod $f_n(s, -1)^2$).
\end{lemma}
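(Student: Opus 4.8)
The plan is to prove the two congruences simultaneously by induction on $k$, using the addition formula for generalized Fibonacci sequences. Throughout I work in $\mathbb{Z}[s]$ with $t = -1$ fixed and abbreviate $f_m = f_m(s,-1)$, so that the congruences are understood modulo $f_n^2$ in $\mathbb{Z}[s]$. The base case $k = 1$ is immediate: the first congruence reads $f_n \equiv f_n$ and the second reads $f_{n+1} \equiv f_{n+1}$.

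For the inductive step I would first record the two instances of the addition formula $f_{x+y} = f_{x+1}f_y - f_x f_{y-1}$ (Theorem 2.2 of \cite{ACMS} specialized to $t=-1$) that I need, namely
\begin{align*}
f_{(k+1)n} &= f_{kn+1}f_n - f_{kn}f_{n-1}, \\
f_{(k+1)n+1} &= f_{kn+1}f_{n+1} - f_{kn}f_n.
\end{align*}
Assuming the two congruences for $k$, I substitute $f_{kn} \equiv k f_{n+1}^{k-1}f_n$ and $f_{kn+1} \equiv f_{n+1}^k \pmod{f_n^2}$. In the expression for $f_{(k+1)n+1}$ the term $f_{kn}f_n \equiv k f_{n+1}^{k-1}f_n^2 \equiv 0$, so $f_{(k+1)n+1} \equiv f_{n+1}^{k}\,f_{n+1} = f_{n+1}^{k+1}$, which is the second congruence for $k+1$.

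The only genuine bookkeeping is in the first congruence, and it hinges on the relation $f_{n-1} = sf_n - f_{n+1}$, i.e.\ $f_{n-1} \equiv -f_{n+1} \pmod{f_n}$, which comes directly from the recurrence with $t = -1$. Using it, $f_{kn}f_{n-1} \equiv k f_{n+1}^{k-1}f_n(-f_{n+1}) = -k f_{n+1}^k f_n \pmod{f_n^2}$, while $f_{kn+1}f_n \equiv f_{n+1}^k f_n$. Subtracting gives $f_{(k+1)n} \equiv f_{n+1}^k f_n + k f_{n+1}^k f_n = (k+1)f_{n+1}^k f_n \pmod{f_n^2}$, which completes the induction.

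The main obstacle, and the reason I favor induction over extracting the first congruence directly from Lemma 2, is controlling denominators. The closed form there carries a factor $2^{1-k}$, and the reduction $g_n \equiv 2f_{n+1} \pmod{f_n}$ only shows that $f_{kn} - k f_{n+1}^{k-1}f_n$ equals $2^{1-k} k f_n^2 B$ for some $B \in \mathbb{Z}[s]$; concluding that this is divisible by $f_n^2$ \emph{in} $\mathbb{Z}[s]$ would then require a separate integrality argument about the power of $2$. The inductive argument sidesteps this entirely, since every manipulation stays within $\mathbb{Z}[s]$, and I would present it that way.
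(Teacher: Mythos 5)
Your proposal is correct and follows essentially the same route as the paper: induction on $k$ using the same two instances of the addition formula $f_{(k+1)n} = f_{kn+1}f_n - f_{kn}f_{n-1}$ and $f_{(k+1)n+1} = f_{kn+1}f_{n+1} - f_{kn}f_n$, together with the reduction $f_{n-1} \equiv -f_{n+1} \pmod{f_n}$, which the paper applies implicitly in passing from $-kf_{n+1}^{k-1}f_nf_{n-1}$ to $+kf_{n+1}^{k-1}f_nf_{n+1}$. Your closing remark about avoiding the $2^{1-k}$ factor in Lemma 2 is a sensible observation but does not change the substance; the argument matches the paper's.
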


\begin{proof}
We use induction on $k$. If $k = 1$, we have $f_n(s, -1) \equiv f_n(s, -1)$ (mod $f_n(s, -1)^2$) and $f_{n + 1}(s, -1) \equiv f_{n + 1}(s, -1)$ (mod $f_n(s, -1)^2$). Assume that $f_{kn}(s, -1) \equiv kf_{n + 1}(s, -1)^{k - 1} f_n(s, -1)$ (mod $f_n(s, -1)^2$) and $f_{kn + 1}(s, -1) \equiv f_{n + 1}(s, -1)^k$  (mod $f_n(s, -1)^2$). Then, we have the following.

\begin{align*}
f_{(k + 1)n}(s, -1) &= f_{kn + 1}(s, -1)f_n(s, -1) - f_{kn}(s, -1)f_{n - 1}(s, -1) \text{ (Theorem 2.2 of \cite{ACMS})} \\
&\equiv f_{n + 1}(s, -1)^k f_n(s, -1) - kf_{n + 1}(s, -1)^{k - 1} f_n(s, -1) f_{n - 1}(s, -1) \text{ (mod $f_n(s, -1)^2$)} \\
&\equiv f_{n + 1}(s, -1)^k f_n(s, -1) + kf_{n + 1}(s, -1)^{k - 1} f_n(s, -1) f_{n + 1}(s, -1) \text{ (mod $f_n(s, -1)^2$)} \\
&\equiv (k + 1)f_{n + 1}(s, -1)^k f_n(s, -1) \text{ (mod $f_n(s, -1)^2$)} \\
\\
f_{(k + 1)n + 1}(s, -1) &= f_{kn + 1}(s, -1)f_{n + 1}(s, -1) - f_{kn}(s, -1)f_n(s, -1) \\
&\equiv f_{n + 1}(s, -1)^k f_{n + 1}(s, -1) - kf_{n + 1}(s, -1)^{k - 1} f_n(s, -1)^2 \text{ (mod $f_n(s, -1)^2$)} \\
&\equiv f_{n + 1}(s, -1)^{k + 1} \text{ (mod $f_n(s, -1)^2$)} 
\end{align*}

\end{proof}

\begin{lemma}
Let $p$ be an odd prime and $e = e(p) = \nu_p(f_{r(p)}(s, -1))$. Then,
$$
\nu_p(f_n(s, -1)) = 
\begin{cases}
\nu_p(n) - \nu_p(r(p)) + e(p), & \text{if $n \equiv 0$ (mod $r(p)$).} \\
0 , & \text{if $n \not\equiv 0$ (mod $r(p)$).}
\end{cases}
$$
\end{lemma}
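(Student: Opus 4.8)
The plan is to abbreviate $f_n := f_n(s,-1)$, $g_n := g_n(s,-1)$, and $r := r(p)$, and to split into the two cases of the statement. The case $n \not\equiv 0 \pmod{r}$ is immediate: by the defining property of the rank $p \nmid f_n$, so $\nu_p(f_n) = 0$. For the case $n \equiv 0 \pmod{r}$ I would write $n = rm$ and note $\nu_p(n) = \nu_p(r) + \nu_p(m)$, so that the claimed identity reduces to the single statement
\begin{equation*}
\nu_p(f_{rm}) = \nu_p(m) + e(p) \qquad \text{for all integers } m \ge 1,
\end{equation*}
which is what remains to prove.

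The key preliminary observation is that $p \nmid g_N$ whenever $p \mid f_N$. This follows from Lemma 1, which gives $\gcd(f_N, g_N) \le 2$: a common prime factor $p$ would force $p \le 2$, impossible for odd $p$. In particular, since $r \mid rm$ implies $p \mid f_{rm}$, we have $\nu_p(g_{rm}) = 0$ for every $m$. Writing $m = p^a u$ with $p \nmid u$, I would treat the base case $a = 0$ using Lemma 3 with $n = r$ and $k = u$: the main term $u f_{r+1}^{\,u-1} f_r$ has valuation exactly $e(p)$ (because $p \nmid u$, and $\gcd(f_r, f_{r+1}) = 1$ gives $p \nmid f_{r+1}$), while the error is divisible by $f_r^2$ and so has valuation at least $2e(p) > e(p)$; the ultrametric property then yields $\nu_p(f_{ru}) = e(p)$.

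For the inductive step I would establish the lifting relation $\nu_p(f_{pN}) = \nu_p(f_N) + 1$ whenever $p \mid f_N$, using the sharper Lemma 2 with $k = p$:
\begin{equation*}
f_{pN} = 2^{1-p} f_N\bigl(K f_N^2 + p\, g_N^{\,p-1}\bigr).
\end{equation*}
Because $p$ is odd, $\nu_p(2^{1-p}) = 0$. Writing $e' = \nu_p(f_N) \ge 1$, the summand $K f_N^2$ has valuation at least $2e' \ge 2$, while $p\, g_N^{\,p-1}$ has valuation exactly $1$ by the observation above; hence the bracketed factor has valuation exactly $1$ and $\nu_p(f_{pN}) = e' + 1$. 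Applying this with $N = r p^j u$ for $j = 0, 1, \dots, a-1$ and starting from $\nu_p(f_{ru}) = e(p)$ gives $\nu_p(f_{rm}) = \nu_p(f_{r p^a u}) = e(p) + a = e(p) + \nu_p(m)$, as required.

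The step I expect to be the main obstacle is exactly this lifting relation, and within it the borderline case $e' = 1$. If one attempts it only through the congruence of Lemma 3 modulo $f_N^2$, the two relevant valuations $1 + e'$ and $2e'$ coincide when $e' = 1$, so the ultrametric inequality cannot separate them and the induction stalls. This is precisely the role of Lemma 2 together with Lemma 1: Lemma 2 exposes the dominant contribution as $p\, g_N^{\,p-1}$ rather than burying it in an error term modulo $f_N^2$, and Lemma 1 guarantees $p \nmid g_N$, pinning its valuation at exactly $1$, strictly below $2e'$ for every $e' \ge 1$. With this sharpening the induction closes uniformly.
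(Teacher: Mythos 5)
Your proposal is correct and follows essentially the same route as the paper's own proof: the base case $\nu_p(f_{ru}(s,-1)) = e(p)$ for $p \nmid u$ via Lemma 3 together with $p \nmid f_{r+1}(s,-1)$, and the lifting step $\nu_p(f_{pN}(s,-1)) = \nu_p(f_N(s,-1)) + 1$ via Lemma 2 with $k = p$, where Lemma 1 pins $\nu_p\bigl(p\,g_N(s,-1)^{p-1}\bigr)$ at exactly $1$. The only differences are cosmetic (you do the base case before the lifting step, and justify $p \nmid f_{r+1}(s,-1)$ by coprimality of consecutive terms rather than by the rank's divisibility property), and your closing remark correctly identifies why Lemma 2, not just Lemma 3, is needed when $e(p) = 1$.
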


\begin{proof}
By Lemma 2, we have that $f_{kn}(s, -1) = 2^{1 - k} f_n(s, -1)(K f_n(s, -1)^2 + k g_n(s, -1)^{k - 1})$, where $K \in \mathbb{Z}$. Let $k  = p$, $n = c r(p) p^{\alpha - 1}$, where gcd($c, p$) = 1 and $\alpha \ge 1$. This means that $f_{cr(p)p^\alpha}(s, -1) = 2^{1 - p} f_{cr(p)p^{\alpha - 1}}(s, -1) (K'p^2 + pg_{cr(p)p^{\alpha - 1}}(s, -1)^{p - 1})$ ($K' \in \mathbb{Z}$). Note that $p \nmid g_{cr(p)p^{\alpha - 1}}(s, -1)$ since \\ $\text{gcd}(f_n(s, -1), g_n(s, -1)) \le 2$ by Lemma 1 and $p$ is an odd prime. This implies that $\nu_p(f_{cr(p)p^\alpha}(s, -1)) = \nu_p(f_{cr(p)p^{\alpha - 1}}(s, -1)) + 1$ and it follows by induction that $\nu_p(f_{cr(p)p^\alpha}(s, -1)) = \nu_p(f_{cr(p)}) + \alpha$. \\

We also have by Lemma 3 that $f_{cr(p)}(s, -1) \equiv cf_{r(p)}(s, -1)f_{r(p) + 1}(s, -1)^{c - 1}$ (mod $p^{2e}$). Note that gcd$(f_{r(p) + 1}(s, -1), p) = 1$ since $f_m(s, -1) \equiv 0$ (mod $p$) if and only if $r(p) | m$. Since  $r(p) \nmid r(p) + 1$, we have $p \nmid f_{r(p) + 1}(s, -1) \Rightarrow $ gcd$(f_{r(p) + 1}(s, -1), p) = 1$ since $p$ is prime. This implies that $\nu_p(f_{cr(p)}(s, -1)) = \nu_p(f_{r(p)}(s, -1))$ when gcd($c$, $p$) = 1. This can be used with  $\nu_p(f_{r(p)}(s, -1)) = e < 2e$ to show that $\nu_p(f_{cr(p)p^\alpha}(s, -1)) = \nu_p(f_{r(p)}(s, -1)) + \alpha = e(p) + \alpha$. Thus, $\nu_p(f_n(s, -1)) = \nu_p(n) - \nu_p(r(p)) + e(p)$ when $r(p) | n$.

\end{proof}

\begin{rem}
It would be interesting to generalize this lemma to relate $p$-adic valuations to the rank modulo $p$ for other values of $t$.
\end{rem}

The results above relating the period of the generalized Fibonacci sequence to the $p$-adic valuations of its elements can be used to show that the following conjecture of Amdeberhan, Chen, Moll, and Sagan holds.

\begin{conj}\cite{ACMS}
Suppose $s \ge 2$ is an integer and $d \ge 3$ is an odd integer. There exist integers $s'(s, d)$ and $d'(s, d)$ such that $d' \le d$ and $\nu_d(f_n(s, -1)) = \delta_{d' \mathbb{Z}}(n)\nu_d(\frac{s'n}{d'})$.
\end{conj}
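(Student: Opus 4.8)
The plan is to reduce the $d$-adic valuation to $p$-adic valuations over the primes dividing $d$ and then apply Lemma 4 prime-by-prime. Write the factorization $d = \prod_i p_i^{a_i}$; since $d$ is odd, every $p_i$ is an odd prime, so Lemma 4 applies to each $p_i$. The key elementary fact I would record first is that $\nu_d(N)$, the largest power of $d$ dividing $N$, equals $\min_i \lfloor \nu_{p_i}(N)/a_i \rfloor$, because $d^k \mid N$ is equivalent to $p_i^{ka_i} \mid N$ for every $i$.

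Next I would choose the candidate $d' = r(d)$. Since $d \mid f_n$ exactly when every $p_i^{a_i} \mid f_n$, Theorem 8 gives $r(d) = \operatorname{lcm}_i r(p_i^{a_i})$, so $\delta_{d'\mathbb{Z}}(n) = 1$ precisely when $r(p_i^{a_i}) \mid n$ for all $i$. I would then split into two cases. If $d' \nmid n$, some $r(p_i^{a_i}) \nmid n$, hence $p_i^{a_i} \nmid f_n$, so $\lfloor \nu_{p_i}(f_n)/a_i\rfloor = 0$ and therefore $\nu_d(f_n) = 0$, matching the vanishing of $\delta_{d'\mathbb{Z}}(n)$.

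The substantive case is $d' \mid n$. Then $r(p_i) \mid r(p_i^{a_i}) \mid n$ for each $i$, so Lemma 4 gives $\nu_{p_i}(f_n) = \nu_{p_i}(n) - \nu_{p_i}(r(p_i)) + e(p_i)$. I would then set $s' = \prod_i p_i^{\,e(p_i) + \nu_{p_i}(r(d)) - \nu_{p_i}(r(p_i))}$; each exponent is at least $e(p_i) \ge 1$ because $r(p_i) \mid r(d)$, so $s'$ is a positive integer depending only on $s$ and $d$. With this choice $\nu_{p_i}(s'n/d') = \nu_{p_i}(s') + \nu_{p_i}(n) - \nu_{p_i}(r(d)) = \nu_{p_i}(f_n)$ for every $i$; applying $\lfloor\,\cdot\,/a_i\rfloor$ and taking the minimum over $i$ yields $\nu_d(f_n) = \nu_d(s'n/d')$, as required. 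The only care needed here is that the floors and the minimum are handled correctly, which is ensured by arranging equality of the numerators term-by-term before any floor is taken.

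It remains to verify the constraint $d' \le d$, which I expect to be the main obstacle, since it is the one step that is not purely formal. From Lemma 4 one derives that the least $n$ with $p^a \mid f_n$ is $r(p^a) = p^{\max(0,\,a - e(p))}\,r(p)$: such an $n$ must be a multiple of $r(p)$, and among these $\nu_p(f_n) = \nu_p(n) - \nu_p(r(p)) + e(p) \ge a$ forces $\nu_p(n) \ge \nu_p(r(p)) + (a - e(p))$. Since $e(p) \ge 1$ and, by Remark 1 (via Theorems 10 and 11), $r(p) \le p$ for odd $p$, this gives $r(p^a) \le p^{a-1} r(p) \le p^a$. Finally $d' = r(d) = \operatorname{lcm}_i r(p_i^{a_i}) \le \prod_i r(p_i^{a_i}) \le \prod_i p_i^{a_i} = d$, which completes the bound and the proof.
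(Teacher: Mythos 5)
Your proof is correct, and its computational engine is the same as the paper's: factor $d = \prod_i p_i^{a_i}$, apply Lemma 4 at each odd prime $p_i$, and choose $s'$ so that the $p_i$-adic valuations of $f_n(s,-1)$ and of $s'n/d'$ agree term by term before any floors or minima are taken, using $\nu_d(N) = \min_i \lfloor \nu_{p_i}(N)/a_i \rfloor$. Where you differ from the paper is exactly at the two points you isolate, and in both cases your treatment closes genuine gaps in the paper's argument (Theorems 13--14). First, the paper takes $d'$ to be an arbitrary divisor of $r(d)$ with $d' \le d$ (it even suggests $d' = 1$) and then verifies the identity only for $n$ divisible by $r(d)$; this leaves open the case $d' \mid n$, $r(d) \nmid n$, where $\nu_d(f_n(s,-1)) = 0$ but $\delta_{d'\mathbb{Z}}(n)\,\nu_d(s'n/d')$ need not vanish. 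Concretely, for $s = 3$, $d = 11^2$, one has $r(11) = 5$ and $e(11) = 1$, so the paper's recipe with $d' = 1$ gives $s' = 11$, and at $n = 11$ we get $\nu_d(f_{11}) = 0$ while $\nu_d(s'n/d') = \nu_{121}(121) = 1$. Your choice $d' = r(d)$ is the one that makes the zero case and the Lemma 4 case exactly complementary, which is what the argument needs. Second, with $d' = r(d)$ the requirement $d' \le d$ becomes a real obligation that the paper never discharges for composite $d$ (Remark 1 only gives $r(p) \le p$ for primes); your derivation of $r(p^a) = p^{\max(0,\,a - e(p))}r(p)$ from Lemma 4, hence $r(d) = \operatorname{lcm}_i r(p_i^{a_i}) \le \prod_i r(p_i^{a_i}) \le \prod_i p_i^{a_i} = d$, supplies the missing bound. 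One small point of hygiene: your $s'$ is supported only on the primes $p_i$, while $r(d)$ may contain other prime factors, so $s'n/d'$ is an integer only because $d' \mid n$ in the case where $\nu_d$ is actually evaluated; if you want $s'n/d'$ integral for all $n$, take instead $s' = d'\prod_i p_i^{e(p_i) - \nu_{p_i}(r(p_i))}$ (the exponents are nonnegative since $e(p_i) \ge 1$ and $r(p_i) \le p_i$ forces $\nu_{p_i}(r(p_i)) \le 1$), which has the same $p_i$-adic valuations as your choice and matches the paper's normalization $d' \mid s'$.
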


We will first show that this conjecture holds when $d$ is an odd prime.

\begin{thm}
Suppose $s \ge 2$ is an integer and $p \ge 3$ is an odd prime. There exist integers $s'(s, d), d'(s, d)$ such that $d' \le d$ and $\nu_p(f_n(s, -1)) = \delta_{d' \mathbb{Z}}(n)\nu_p(\frac{s'n}{d'})$.
\end{thm}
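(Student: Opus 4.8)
The plan is to read the result straight off Lemma 4, which already computes $\nu_p(f_n(s,-1))$ exactly; the only content of the theorem is to repackage that two-case formula into the single expression $\delta_{d'\mathbb{Z}}(n)\,\nu_p(s'n/d')$. So the proof reduces to choosing $d'$ and $s'$ correctly (here $d = p$ is the odd prime of the statement) and then matching the two branches of Lemma 4.

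First I would set $d' = r(p)$, the rank modulo $p$. This exists because $t = -1$ (as noted after Theorem 9, $r(m)$ exists for every $m \ge 2$ in this case), and by Remark 2 we have $r(p) \le p = d$, which is precisely the bound $d' \le d$ demanded by the statement. With this choice the indicator $\delta_{d'\mathbb{Z}}(n)$ equals $1$ exactly when $r(p) \mid n$ and $0$ otherwise, so it reproduces the case split of Lemma 4 on the hypothesis $n \equiv 0 \pmod{r(p)}$.

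Next I would take $s'$ to be any integer with $\nu_p(s') = e(p) = \nu_p(f_{r(p)}(s,-1))$; the cleanest choices are $s' = p^{e(p)}$ or $s' = f_{r(p)}(s,-1)$ itself. The purpose of this choice is to convert the additive constant $e(p)$ appearing in Lemma 4 into a multiplicative factor living inside the valuation. Since $\nu_p$ is additive on products and quotients, whenever $r(p) \mid n$ (so that $s'n/d'$ is an integer) we obtain
\[
\nu_p\!\left(\frac{s'n}{d'}\right) = \nu_p(s') + \nu_p(n) - \nu_p(r(p)) = e(p) + \nu_p(n) - \nu_p(r(p)),
\]
which is exactly the first branch of Lemma 4.

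Finally I would check both cases. If $r(p) \nmid n$, then $\delta_{d'\mathbb{Z}}(n) = 0$ and the right-hand side vanishes, matching the branch of Lemma 4 where $\nu_p(f_n(s,-1)) = 0$. If $r(p) \mid n$, then $\delta_{d'\mathbb{Z}}(n) = 1$ and the display above gives the first branch, finishing the identification. I do not expect a genuine obstacle: the substantive work is carried by Lemma 4 and Remark 2. The only points needing a word of care are that the inequality $d' \le d$ rests on $r(p) \le p$ (hence on the specialization $t = -1$ via Remark 2), and that the rewriting of the additive offset $e(p)$ as the factor $s'$ is legitimate precisely because $\nu_p$ is a valuation; moreover the product $\delta_{d'\mathbb{Z}}(n)\,\nu_p(s'n/d')$ is unambiguous even when $d' \nmid n$, since the indicator then annihilates the second factor regardless of its value.
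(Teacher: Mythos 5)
Your proposal is correct and takes essentially the same approach as the paper: both set $d' = r(p)$ (justified by the remark that $r(p) \le p$ when $t = -1$) and read off both cases from Lemma 4. The only cosmetic difference is the normalization of $s'$ --- the paper takes $s' = d' \cdot p^{e(p) - \nu_p(r(p))}$ so that $d' \mid s'$, while you take any $s'$ with $\nu_p(s') = e(p)$, such as $s' = p^{e(p)}$; both choices amount to the same condition $\nu_p(s'/d') = e(p) - \nu_p(r(p))$, so the proofs coincide in substance.
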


\begin{proof}
If there exists $d'$ such that if $\delta_{d' \mathbb{Z}}(n) = 0$, we have that $\nu_d(f_n(s, -1)) = 0$. This implies that $d' \nmid n \Rightarrow d \nmid f_n(s, -1)$, which is equivalent to the statement $d | f_n(s, -1) \Rightarrow d' | n$. Note that $d'$ divides the rank mod $p$. Let $r(p)$ be the rank mod $p$. Setting $d' = r(p)$ satisfies the previous condition and we have $d' \le p$ (see remark following Theorem 4). \\

Consider the case where $r(p) | n$. Then, $\nu_p(f_n(s, -1)) = \nu_p(\frac{s'n}{d'})$ if and only if $\nu_p(n) - \nu_p(r(p)) + e(p) = \nu_p(\frac{s'n}{d'})$. If we take $s'$ such that $d' | s'$, we have $\nu_p(\frac{s'n}{d'}) = \nu_p(n) + \nu_p(\frac{s'}{d'})$ and the statement reduces to $\nu_p(\frac{s'}{d'}) = -\nu_p(r(p)) + e(p)$. Note that we either have $\nu_p(r(p))$ equal to 0 or 1, which means that $-\nu_p(r(p)) + e(p) \ge 0$ as $p | f_{r(p)}(s, -1)$. This equation is satisfied if we set $s' = d' \cdot p^{- \nu_p(r(p)) + e(p)}$, which only depends on $s$ and $p$. 

\end{proof}

This result can be generalized to the case where $d$ is the power of an odd prime. 

\begin{thm}
Suppose $s \ge 2$ is an integer and $d = p^r$, where $p$ is an odd prime ($p \ge 3$). There exist integers $s'(s, d)$ and $d'(s, d)$ such that $d' \le d$ and $\nu_d(f_n(s, -1)) = \delta_{d' \mathbb{Z}}(n)\nu_d(\frac{s'n}{d'})$.
\end{thm}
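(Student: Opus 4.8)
The plan is to reduce everything to the prime case already settled in Theorem 13 by exploiting the identity $\nu_{p^r}(x) = \lfloor \nu_p(x)/r \rfloor$, which expresses the $d$-adic valuation purely in terms of the $p$-adic valuation computed by Lemma 4. The natural choice for $d'$ is now the rank of the sequence modulo $p^r$ rather than modulo $p$; I will write $d' = r(p^r)$ and show it has all the required properties, then select $s'$ so that the two sides of the claimed identity become floors of literally the same quantity.

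First I would pin down $r(p^r)$ using Lemma 4. Since $p^r \mid f_m(s,-1)$ holds exactly when $r(p) \mid m$ and $\nu_p(m) - \nu_p(r(p)) + e(p) \ge r$, the smallest such $m$ is
$$d' = r(p^r) = r(p)\, p^{\max(0,\, r - e(p))}.$$
Using $r(p) \le p$ (the remark after Theorem 11) together with $e(p) \ge 1$, this yields $d' \le p \cdot p^{r-1} = p^r = d$, as required. I must also dispose of the vanishing case: if $d' \nmid n$, then either $r(p) \nmid n$, so $\nu_p(f_n(s,-1)) = 0$ by Lemma 4, or $r(p) \mid n$ while $\nu_p(n) < \nu_p(d')$, which forces $r > e(p)$ and hence $\nu_p(f_n(s,-1)) < r$; in either event $\nu_{p^r}(f_n(s,-1)) = \lfloor \nu_p(f_n(s,-1))/r \rfloor = 0$, matching $\delta_{d'\mathbb{Z}}(n) = 0$.

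For the main case $d' \mid n$ (so in particular $r(p) \mid n$), Lemma 4 gives $\nu_p(f_n(s,-1)) = \nu_p(n) - \nu_p(r(p)) + e(p)$, while $\nu_p(s'n/d') = \nu_p(s') + \nu_p(n) - \nu_p(d')$. Since $\nu_d = \lfloor \nu_p(\cdot)/r\rfloor$ is applied to both quantities, it suffices to make the two arguments of the floor agree, i.e. to choose $s'$ with
$$\nu_p(s') = e(p) - \nu_p(r(p)) + \nu_p(d') = \max(e(p),\, r),$$
where the last equality uses $\nu_p(d') = \nu_p(r(p)) + \max(0,\, r - e(p))$. I would take $s'$ to be $d'$ with its $p$-part replaced by $p^{\max(e(p),r)}$; since $\nu_p(r(p)) \in \{0,1\}$ and $e(p) \ge 1$ give $\nu_p(d') \le \max(e(p),r)$, this keeps $d' \mid s'$ and depends only on $s$ and $d$. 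Then both sides of the asserted identity are the floor of the same expression, completing the proof.

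The main obstacle I anticipate is correctly computing the rank $r(p^r)$ and carrying the case split $r \le e(p)$ versus $r > e(p)$ consistently through the bound $d' \le p^r$, the vanishing case, and the choice of $s'$. The floor in $\nu_{p^r}(x) = \lfloor \nu_p(x)/r \rfloor$ is only harmless because I arrange for the two numerators to be exactly equal rather than merely equal after flooring; if I had instead tried to match $\nu_{p^r}(f_n)$ and $\nu_{p^r}(s'n/d')$ directly I would have had to control rounding. A secondary point to verify carefully is the inequality $\nu_p(r(p)) \le 1$ noted in the proof of Theorem 13, which is exactly what guarantees that the prescribed $s'$ is compatible with $d' \mid s'$ in both cases.
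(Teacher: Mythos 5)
Your proof is correct, and its computational core matches the paper's: both reduce $\nu_{p^r}$ to $\nu_p$ via $\nu_{p^r}(N) = \lfloor \nu_p(N)/r \rfloor$, invoke Lemma 4, and pick $s'$ so that the two numerators inside the floors are literally equal; in fact your $s'$ satisfies $\nu_p(s'/d') = e(p) - \nu_p(r(p))$, which is exactly the paper's prescription $s' = d'\cdot p^{-\nu_p(r(p))+e(p)}$. The genuine difference is the choice of $d'$ and the completeness of the case analysis, and here your version is strictly better. The paper takes $d'$ to be an arbitrary divisor of $r(d)$ at most $d$ (it even suggests $d'=1$) and verifies the identity only when $r(d) \mid n$; it never treats the case $d' \mid n$ but $r(p) \nmid n$, where $\nu_d(f_n(s,-1)) = 0$ while $\nu_d(s'n/d')$ can be positive. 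With $d'=1$ the claimed identity actually fails: for $s=3$, $p=7$, $r=2$ one has $r(7)=4$, $e(7)=1$, so the paper's recipe gives $s'=7$, and at $n=7$ the left side is $\nu_{49}(f_7)=\nu_{49}(377)=0$ while the right side is $\nu_{49}(49)=1$. Your choice $d' = r(p^r)$, computed explicitly as $r(p)\,p^{\max(0,\,r-e(p))}$, is precisely what is needed, since it guarantees $d' \mid n \Rightarrow r(p) \mid n$; your verification of the vanishing case (splitting on $r \le e(p)$ versus $r > e(p)$), of the bound $d' \le d$, and of $d' \mid s'$ closes the gap the paper leaves open, so your argument is not merely a rederivation but a repair of the paper's proof. (One trivial slip: the prime case and the observation $\nu_p(r(p)) \le 1$ are in Theorem 12, not Theorem 13.)
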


\begin{proof}
We set $d'$ equal to some divisor of $r(d)$ less than or equal to $d$ (eg. $d' = 1$). We now consider the case where $r(d) | n$. We claim that there exists an $s'$ divisible by $d'$ such that $\nu_d(f_n(s, -1)) = \nu_d(\frac{s'n}{d'})$. Since $\nu_{p^r}(N) = \lfloor \frac{\nu_p(N)}{r} \rfloor$, we want to find $s'$ such that $\lfloor \frac{\nu_p(n) - \nu_p(r(p)) + e(p)}{r} \rfloor = \lfloor \frac{\nu_p(\frac{s'n}{d'})}{r} \rfloor = \lfloor \frac{-\nu_p(n) + \nu_p(\frac{s'}{d'})}{r} \rfloor$. We find that setting $s' = d' \cdot p^{-\nu_p(r(p)) + e(p)}$ as in the end of the proof of Theorem 12 also satisfies this equation since the numerators are equal.

\end{proof}

We can use solutions for powers of odd primes to show that the conjecture holds for arbitrary odd integers $d$. 

\begin{thm}
Suppose $s \ge 2$ is an integer and $d \ge 3$ is an odd integer. There exist integers $s'(s, d)$ and $d'(s, d)$ such that $d' \le d$ and $\nu_d(f_n(s, -1)) = \delta_{d' \mathbb{Z}}(n)\nu_d(\frac{s'n}{d'})$.
\end{thm}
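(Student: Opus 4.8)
The plan is to reduce the general odd modulus to the prime-power case already handled in Theorem 13 by factoring $d = \prod_{i=1}^{k} p_i^{r_i}$ into distinct odd prime powers and splicing the valuation data together with the Chinese Remainder Theorem. The elementary fact driving the reduction is that for pairwise coprime prime powers one has, for every positive integer $N$,
$$\nu_d(N) = \min_{1 \le i \le k} \nu_{p_i^{r_i}}(N) = \min_{1 \le i \le k} \left\lfloor \frac{\nu_{p_i}(N)}{r_i} \right\rfloor,$$
since $d^j \mid N$ holds exactly when $p_i^{j r_i} \mid N$ for every $i$. So I would look for a single pair $(s', d')$, depending only on $s$ and $d$, for which the desired identity holds in each $p_i$-adic valuation at once; taking the minimum over $i$ then gives the statement for $\nu_d$.

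First I would identify the rank modulo $d$. Because $d \mid f_n(s,-1)$ if and only if $p_i^{r_i} \mid f_n(s,-1)$ for all $i$, and the latter is equivalent to $r(p_i^{r_i}) \mid n$, the rank modulo $d$ exists and equals $r(d) = \operatorname{lcm}_{i} r(p_i^{r_i})$, with $d \mid f_n(s,-1) \Leftrightarrow r(d) \mid n$. I would set $d' = r(d)$. To confirm $d' \le d$, I would combine $r(p) \le p$ for odd primes (the remark after Theorem 11) with the bound $r(p^r) \le r(p)\,p^{r-1}$, which is read off from Lemma 4, to get $r(p_i^{r_i}) \le p_i^{r_i}$ and hence $r(d) \le \prod_i r(p_i^{r_i}) \le \prod_i p_i^{r_i} = d$. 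With $d' = r(d)$, whenever $d' \nmid n$ both sides of the claimed identity vanish: the left because $d \nmid f_n(s,-1)$, the right because $\delta_{d'\mathbb{Z}}(n) = 0$.

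The remaining case is $r(d) \mid n$, where $r(p_i) \mid r(d) \mid n$ for every $i$, so Lemma 4 applies and gives $\nu_{p_i}(f_n(s,-1)) = \nu_{p_i}(n) - \nu_{p_i}(r(p_i)) + e(p_i)$. I would then choose $s'$ to be any positive integer whose $p_i$-adic valuation is $\nu_{p_i}(s') = \nu_{p_i}(r(d)) - \nu_{p_i}(r(p_i)) + e(p_i)$ for each $i$; each exponent is at least $1$ (since $r(p_i) \mid r(d)$ forces the first difference to be nonnegative and $e(p_i) \ge 1$), so such an $s'$ exists and depends only on $s$ and $d$. A one-line computation then yields, for every $i$,
$$\nu_{p_i}\!\left(\tfrac{s'n}{d'}\right) = \nu_{p_i}(s') + \nu_{p_i}(n) - \nu_{p_i}(r(d)) = \nu_{p_i}(n) - \nu_{p_i}(r(p_i)) + e(p_i) = \nu_{p_i}(f_n(s,-1)),$$
so the two quantities agree in every $p_i$-adic valuation. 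Applying the displayed minimum formula term by term gives $\nu_d(s'n/d') = \nu_d(f_n(s,-1))$, and since $\delta_{d'\mathbb{Z}}(n) = 1$ here the identity holds.

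I expect the main obstacle to be organizational rather than computational: one must produce a single global pair $(s', d')$ that works simultaneously for every residue class of $n$, even though $\nu_d$ is a minimum over the primes $p_i$ and therefore does not split as a product. The device that overcomes this is the explicit formula of Lemma 4, which lets me match $\nu_{p_i}(f_n(s,-1))$ and $\nu_{p_i}(s'n/d')$ exponent by exponent before taking the minimum; the two genuinely arithmetic points to nail down carefully are that $r(d) = \operatorname{lcm}_i r(p_i^{r_i}) \le d$ and that the prescribed exponents defining $s'$ are nonnegative, so that $s'$ is a bona fide integer independent of $n$.
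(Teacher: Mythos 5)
Your proof is correct and takes essentially the same route as the paper's: factor $d$ into odd prime powers, match the $p_i$-adic valuations of $f_n(s,-1)$ and $s'n/d'$ term by term using Lemma 4 (choosing $s'$ by prescribing its $p_i$-adic valuations), and conclude via $\nu_d(N) = \min_{1 \le i \le k} \lfloor \nu_{p_i}(N)/r_i \rfloor$. If anything, you are more careful than the paper, whose proof allows $d'$ to be an arbitrary divisor of the rank mod $d$ (even $d' = 1$) and never addresses the case $d' \mid n$ but $r(d) \nmid n$, where the claimed identity can fail (e.g.\ $s = 3$, $d = 11$, $d' = 1$, $n = 1$); your insistence on $d' = r(d)$, together with the explicit bounds $r(p^r) \le r(p)\,p^{r-1} \le p^r$ and the integrality check on $s'$, closes exactly that gap.
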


\begin{proof}
Set $d'$ to be a divisor of the rank mod $d$ less than or equal to $d$ as above. Let $d = p_1^{\alpha_1} \cdots p_k^{\alpha_k}$ and $s' = d' \cdot \displaystyle\prod_{j = 1}^k {p_j^{-\nu_{p_j}(r(p_i)) + e(p_j)}}$. We have $\nu_{p_i}(n) = \nu_{p_i}(n) - \nu_{p_i}(r(p_i)) + e(p_i)$ and $\nu_{p_i}(\frac{s'n}{d'}) =\nu_{p_i}(n) + \nu_{p_i}(\frac{s'}{d'}) = \nu_{p_i}(n) + \nu_{p_i}(\displaystyle\prod{p_j^{\nu_{p_j}(-r(p_j)) + e(p_j)}}) = \nu_{p_i}(n) - \nu_{p_i}(r(p_i)) + e(p_i)$. So, we have $\nu_{p_i}(f_n(s, -1)) = \nu_{p_i}(\frac{s'n}{d'})$ for all $i$. Since $\nu_d(N) = \min_{1 \le i \le k} \lfloor \frac{\nu_{p_i}(N)}{\alpha_i} \rfloor$ for $N \in \mathbb{N}$, we have $\nu_d(f_n(s, -1)) = \min_{1 \le i \le k} \lfloor \frac{\nu_{p_i}(f_n(s, -1))}{\alpha_i} \rfloor = \min_{1 \le i \le k} \lfloor \frac{\nu_{p_i}(\frac{s'n}{d'})}{\alpha_i} \rfloor = \nu_d(\frac{s'n}{d'})$.

\end{proof}

\section{An analogue of the Riemann zeta function}
We now consider an analogue of the Riemann zeta function. The Riemann zeta function is defined as the analytic continuation of $\zeta(z)  = \sum_{k = 1}^\infty \frac{1}{k^z}$, where $z \in \mathbb{C}$. One variation of this function which has been considered is the function $\zeta_F(z)  = \sum_{k = 0}^\infty \frac{1}{F_k^z}$ with $z \in \mathbb{C}$, where the positive integers are replaced with terms of the Fibonacci sequence \cite{WZ}. This function shares some properties with the original Riemann zeta function (see \cite{M} for more details) and has been studied in several different ways (see \cite{HK} for an overview). Some of the work that has been done with this analogue of the Riemann zeta function involves estimating the tails of the series for positive integers $z$. The first result relating to such estimates is the following.

\begin{thm} [Ohtsuka and Nakamura] \cite{ON} Let $\{F_k\}$ be the Fibonacci sequence.
$$
\left\lfloor \left(\sum_{k = n}^\infty \frac{1}{F_k}\right)^{-1} \right\rfloor = 
\begin{cases}
F_{n - 2}, & \text{if $n$ even, $n \ge 2$} \\
F_{n - 2} - 1, & \text{if $n$ odd, $n \ge 1$}
\end{cases}
$$  

$$
\left\lfloor \left(\sum_{k = n}^\infty \frac{1}{F_k^2}\right)^{-1} \right\rfloor = 
\begin{cases}
F_{n - 1}F_n - 1, & \text{if $n$ even, $n \ge 2$} \\
F_{n - 1}F_n, & \text{if $n$ odd, $n \ge 1$}
\end{cases}
$$  
\end{thm}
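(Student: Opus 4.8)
The plan is to pin down each floor by sandwiching the tail sum strictly between two consecutive unit fractions. Write $T_n = \sum_{k=n}^\infty 1/F_k$ and $U_n = \sum_{k=n}^\infty 1/F_k^2$. For the first statement it suffices to show $\frac{1}{F_{n-2}+1} < T_n \le \frac{1}{F_{n-2}}$ when $n$ is even and $\frac{1}{F_{n-2}} < T_n \le \frac{1}{F_{n-2}-1}$ when $n$ is odd; in both cases the ``main term'' is $1/F_{n-2}$, and only the direction and the size of the deviation from it depend on the parity of $n$. For the second statement the main term will instead be $1/(F_{n-1}F_n)$.

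The key step is a telescoping identity extracted from Cassini's identity $F_{k-1}F_{k+1}-F_k^2=(-1)^k$. For the first sum I would first record the consequence $F_kF_{k-3}-F_{k-1}F_{k-2}=(-1)^k$ and use it to verify
\[ \frac{1}{F_k} = \left(\frac{1}{F_{k-2}}-\frac{1}{F_{k-1}}\right) - \frac{(-1)^k}{F_{k-2}F_{k-1}F_k}. \]
Summing over $k \ge n$, the parenthesised part telescopes to $1/F_{n-2}$, which gives
\[ T_n = \frac{1}{F_{n-2}} + \sum_{k=n}^\infty \frac{(-1)^{k+1}}{F_{k-2}F_{k-1}F_k}. \]
The remaining series is alternating with terms decreasing to $0$, so by the Leibniz estimate its sign equals that of its first term $(-1)^{n+1}$, and its absolute value is strictly less than the first term $\frac{1}{F_{n-2}F_{n-1}F_n}$. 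The sign immediately places $T_n$ on the correct side of $1/F_{n-2}$ (below it for even $n$, above it for odd $n$), and the magnitude bound, together with the elementary inequality $F_{n-1}F_n \ge F_{n-2}+1$ (respectively $F_{n-1}F_n \ge F_{n-2}-1$), shows that $T_n$ cannot reach the next unit fraction. This yields both halves of the sandwich and hence the claimed value of $\lfloor 1/T_n\rfloor$.

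For the second sum I would run the identical argument with the telescoping quantity $1/(F_{k-1}F_k)$ in place of $1/F_{k-2}$. Using $F_{k-1}F_{k+1}=F_k^2+(-1)^k$ one obtains
\[ \frac{1}{F_k^2} = \left(\frac{1}{F_{k-1}F_k}-\frac{1}{F_kF_{k+1}}\right) + \frac{(-1)^k}{F_{k-1}F_k^2F_{k+1}}, \]
so that $U_n = \frac{1}{F_{n-1}F_n} + \sum_{k=n}^\infty \frac{(-1)^k}{F_{k-1}F_k^2F_{k+1}}$, again an alternating tail. Here the sign of the first term is $(-1)^n$, which flips the parity pattern and explains why the $\mp 1$ switches sides relative to the first statement; the first-term magnitude bound combined with $F_nF_{n+1}-F_{n-1}F_n=F_n^2\ge 1$ closes the sandwich.

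The part I expect to demand the most care is not conceptual but bookkeeping: getting every strict versus non-strict inequality exactly right, since the floor is determined only when the endpoints are handled precisely, and checking the small indices where the main term degenerates. For instance, $n=1,2,3$ must be verified directly (at $n=2$, $F_0=0$ makes $1/F_{n-2}$ meaningless, and at $n=3$ the fraction $1/(F_{n-2}-1)$ is $1/0$, so one instead checks by hand that $T_2>1$ and $T_3>1$, giving floor $0=F_0$ and $0=F_1-1$), and similarly $n=1$ for $U_n$. I would also confirm the monotonicity of the alternating-series terms and note that the boundary Fibonacci inequality $F_{n-1}F_n$ versus $F_{n-2}+1$ is only an equality in the degenerate case $n=2$, so it causes no trouble in the generic range. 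Once these base cases are dispatched, the two telescoping identities do all of the real work.
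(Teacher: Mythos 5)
Your proof is correct, but there is nothing in the paper to compare it against: this theorem (Theorem 15) is quoted from Ohtsuka and Nakamura \cite{ON} purely as background for Section 3, and the paper never proves it. Checking your argument on its own terms: both telescoping identities are valid (the first reduces to $F_kF_{k-3}-F_{k-1}F_{k-2}=(-1)^k$, a Vajda-type consequence of Cassini; the second is Cassini itself), the terms $1/(F_{k-2}F_{k-1}F_k)$ and $1/(F_{k-1}F_k^2F_{k+1})$ are strictly decreasing, so the Leibniz estimate gives the strict two-sided bounds you need, and the closing inequalities $F_{n-1}F_n\ge F_{n-2}+1$ and $F_nF_{n+1}-F_{n-1}F_n=F_n^2\ge 1$ hold, with the degenerate indices $n\le 3$ correctly identified and disposed of by hand. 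It is worth noting how your route differs from the one the paper uses for its own generalization (Theorem 18, modeled on ACMS's Theorem 17, which is the closest thing to an in-paper proof of a statement of this type): there the tail is never telescoped in closed form; instead the candidate value $f_{rn}-f_{r(n-1)}$ is multiplied against the tail sum, the leading term $1$ is subtracted, and the remaining series is compared term by term --- the term for $k=n+m$ against the term for $k=n+m-1$ --- via the identity of Lemma 5, with the sign bookkeeping absorbed into $\delta_{2\mathbb{Z}}(r(n-1))$. Your decomposition into a main term plus an alternating remainder is tighter and more transparent for the classical Fibonacci case, since it produces both bounds in one stroke; the paper's pairwise-comparison scheme is heavier here, but it is the one that extends to general parameters $(s,t)$, where no such clean alternating structure survives.
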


Note that no closed form is known for the sum $\sum_{k = n}^\infty \frac{1}{F_k}$ although some of its properties are known. Holliday and Komatsu \cite{HK} gave the first result relating to sums of more general terms. Specifically, they studied the case where the denominators were Fibonacci polynomials with a fixed integer parameter. Here is one of their results.

\begin{thm}[Holliday and Komatsu] \cite{HK}
If $t = 1$ and $s, n \in \mathbb{Z}^{+}$, then

\begin{align*}
\left\lfloor \left(\sum_{k = n}^\infty \frac{1}{f_{k}(s, 1)}\right)^{-1} \right\rfloor = f_{n}(s, 1) - f_{n - 1}(s, 1) - \delta_{\mathbb{Z} \setminus 2\mathbb{Z}}(n),
\end{align*}

and

\begin{align*}
\left\lfloor \left(\sum_{k = n}^\infty \frac{1}{f_{k}(s, 1)^2}\right)^{-1} \right\rfloor = s f_n(s, 1) f_{n - 1}(s, 1) - \delta_{2\mathbb{Z}}(n).
\end{align*}

\end{thm}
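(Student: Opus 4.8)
The plan is to reduce each floor evaluation to a pair of two-sided inequalities and then prove those by exhibiting an \emph{exact} telescoping decomposition of each summand whose error term is an alternating series. Write $f_k$ for $f_k(s,1)$ throughout this sketch. Since $\lfloor 1/S\rfloor = A$ is equivalent to $\frac{1}{A+1} < S \le \frac{1}{A}$, it suffices, for the first sum with $S_n = \sum_{k\ge n} 1/f_k$, to prove $\frac{1}{A_n + 1} < S_n \le \frac{1}{A_n}$ where $A_n = f_n - f_{n-1} - \delta_{\mathbb{Z}\setminus 2\mathbb{Z}}(n)$, and analogously for the squared sum.

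The algebraic engine is the Cassini-type identity $f_{k+1}f_{k-1} - f_k^2 = (-1)^k$, which holds precisely because $t=1$ and which I would verify by a one-line induction. With $e_k := f_k - f_{k-1}$ and $V_k := 1/e_k$, a direct computation using $f_{k+1} = sf_k + f_{k-1}$ together with the Cassini identity gives the exact decomposition
\[ \frac{1}{f_k} = (V_k - V_{k+1}) + \frac{(-1)^{k+1}}{f_k\, e_k\, e_{k+1}}. \]
Summing from $k=n$ to $\infty$ (legitimate since $V_k \to 0$) yields $S_n = V_n + \sum_{k\ge n} \frac{(-1)^{k+1}}{f_k e_k e_{k+1}}$. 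The correction is an alternating series whose terms are positive, strictly decreasing, and tend to $0$, so it is nonzero with the sign of its first term and of magnitude strictly below that first term $\frac{1}{f_n e_n e_{n+1}}$. The sign statement alone already yields one of the two required inequalities in each parity class (for instance $S_n < 1/e_n$ when $n$ is even), while the magnitude bound yields the other after one checks the elementary growth inequality $f_n e_{n+1} \ge e_n \pm 1$, which follows from $f_n \ge s f_{n-1}$ and the recurrence.

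For the squared sum I would run the identical argument with $W_k := 1/(s f_k f_{k-1})$. Using $f_{k+1} - f_{k-1} = s f_k$ one finds $W_k - W_{k+1} = 1/(f_{k-1}f_{k+1})$, and the Cassini identity then gives
\[ \frac{1}{f_k^2} = (W_k - W_{k+1}) + \frac{(-1)^{k}}{f_k^2\, f_{k-1}\, f_{k+1}}, \]
so that $T_n := \sum_{k\ge n} 1/f_k^2 = W_n + \sum_{k\ge n}\frac{(-1)^k}{f_k^2 f_{k-1}f_{k+1}}$. Once again the alternating tail supplies the sign, hence one inequality, for free, and its first-term magnitude bound supplies the other after verifying $f_n f_{n+1} \ge s^2 f_n f_{n-1} \pm s$, which reduces to $f_n \ge s f_{n-1}$.

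The routine but genuinely necessary work, and the main obstacle, is twofold. First, one must identify the correct auxiliary sequences $V_k$ and $W_k$; once they are guessed, the telescoping identities are mechanical, but they are exactly what converts an infinite tail into a closed leading term plus a controllable alternating remainder. Second, there is the bookkeeping that turns the first-term magnitude estimates into the sharp $\pm 1$ and $\pm\delta$ corrections, which requires the growth inequalities above together with separate treatment of the small-$n$ edge cases — in particular $n=1$, where $f_0 = 0$ forces $W_1$ to be handled directly rather than through the telescoping.
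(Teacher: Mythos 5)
The paper never proves this statement: it is Theorem~16, quoted from Holliday and Komatsu \cite{HK} as background, so there is no internal proof to compare yours against. The closest relative in the paper is the proof of Theorem~18 (the $-t$ analogue), which establishes the two-sided bounds by comparing the tail series term-by-term against $f_{rn}-f_{r(n-1)}$ via the ACMS product identity $f_{rn}f_{r(n+m-1)}-f_{r(n-1)}f_{r(n+m)}=(-t)^{r(n-1)}f_rf_{rm}$ and then taking limits in the Binet form. Your route is genuinely different and, as far as the core mechanism goes, correct: I checked both of your exact decompositions. With $e_k=f_k-f_{k-1}$, the numerator of $\frac{1}{f_k}-(V_k-V_{k+1})$ equals $e_ke_{k+1}-f_k\bigl((s-2)f_k+2f_{k-1}\bigr)=f_k\bigl(f_{k+1}-sf_k-f_{k-1}\bigr)+(-1)^{k+1}=(-1)^{k+1}$ by Cassini, and the squared version is immediate from $f_{k-1}f_{k+1}-f_k^2=(-1)^k$ together with $W_k-W_{k+1}=1/(f_{k-1}f_{k+1})$. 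The alternating-series remainder bound then delivers one inequality per parity from the sign alone and the other from the first-term estimate, and the required growth inequalities ($f_ne_{n+1}\ge e_n\pm1$ and $f_nf_{n+1}\ge s^2f_nf_{n-1}\pm s$) do hold for $s\ge2$, $n\ge2$ (the second needs $f_nf_{n-1}\ge s$ in addition to $f_n\ge sf_{n-1}$, which is fine for $n\ge2$). What your approach buys is a self-contained proof with an explicit, signed error term; what the paper's (i.e., ACMS's) style buys is uniformity in the extra parameter $r$ and transferability to the $-t$ case, where Cassini-type telescoping is less clean.

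One caveat you should handle more carefully than your sketch does: the degenerate cases are not only $n=1$. When $s=1$ you have $e_2=f_2-f_1=0$, so $V_2$ and every term of the first decomposition with $k\le2$ are undefined; thus for $s=1$ both $n=1$ and $n=2$ of the first identity require direct verification, not just $n=1$. Likewise, for odd $n$ with $e_n=1$ (namely $n=1$, or $s=1$, $n=3$) the target value $f_n-f_{n-1}-1$ is $0$, so the inequality $S_n\le 1/A_n$ is vacuous and must be replaced by the direct statement $S_n>1$. These are finitely many easy checks, consistent with the "edge cases" you reserve room for, but they need to be enumerated correctly for the theorem's full range $s,n\in\mathbb{Z}^{+}$.
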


According to \cite{ACMS}, Holliday and Komatsu asked whether the above result could be generalized for other $t$. The following result from \cite{ACMS} generalizes the first sum and also considers a more general class of sums than those considered in \cite{HK}. 

\begin{thm} [ACMS] \cite{ACMS}
If $s \ge t \ge 1$ and $n, r \in \mathbb{Z}^+$ then 

\begin{align*}
\left\lfloor \left(\sum_{k = n}^\infty \frac{1}{f_{rk}(s, t)}\right)^{-1} \right\rfloor = f_{rn}(s, t) - f_{r(n - 1)}(s, t) - \delta_{2\mathbb{Z}}(r(n - 1)).
\end{align*}

If $t = 1$ and $s, n, r \in \mathbb{Z}^+$ then

\begin{align*}
\left\lfloor \left(\sum_{k = n}^\infty \frac{1}{f_{rk}(s, 1)^2}\right)^{-1} \right\rfloor = f_{rn}(s, 1)^2 - f_{r(n - 1)}(s, 1)^2 - \delta_{2\mathbb{Z}}(r(n - 1)).
\end{align*}

\end{thm}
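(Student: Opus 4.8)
The plan is to fix $n$ and reduce the statement about the floor to a two-sided estimate on the tail $S_n = \sum_{k=n}^\infty 1/f_{rk}(s,t)$. Writing $P_n = f_{rn}(s,t) - f_{r(n-1)}(s,t)$, the asserted value is $M_n = P_n - \delta_{2\mathbb{Z}}(r(n-1))$, and $\lfloor S_n^{-1}\rfloor = M_n$ is equivalent to $\frac{1}{M_n+1} < S_n \le \frac{1}{M_n}$. Because $s \ge t \ge 1$ forces the sequence to be positive and strictly increasing, $P_k \to \infty$, so the natural strategy is a telescoping comparison: introduce the potential $\phi(k) = 1/P_k$, note that $\phi(k) \to 0$, and trap each term $1/f_{rk}$ between $\phi(k)-\phi(k+1)$ and a nearby quantity, so that summing over the tail collapses to $\phi(n) = 1/P_n$.

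The first key step is a per-term reduction. Clearing denominators (all positive), the inequality $\frac{1}{f_{rk}} \le \phi(k) - \phi(k+1)$ simplifies to
\begin{align*}
f_{rk}(s,t)^2 \le f_{r(k-1)}(s,t)\, f_{r(k+1)}(s,t),
\end{align*}
with the reverse inequality corresponding to the reverse comparison. I would pin down the exact discrepancy by proving a Catalan-type identity
\begin{align*}
f_m(s,t)^2 - f_{m-d}(s,t)\, f_{m+d}(s,t) = (-t)^{m-d} f_d(s,t)^2,
\end{align*}
using the addition formula (Theorem 2.2 of \cite{ACMS}) and induction on $d$; taking $m = rk$, $d = r$ gives $f_{rk}^2 - f_{r(k-1)}f_{r(k+1)} = (-t)^{r(k-1)} f_r^2$. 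Since $t \ge 1$ makes $-t$ negative, the sign of this discrepancy is exactly $(-1)^{r(k-1)}$, so the telescoping inequality points one way when $r(k-1)$ is odd and the other way when it is even. This is precisely the source of the correction $\delta_{2\mathbb{Z}}(r(n-1))$: it records whether $S_n^{-1}$ sits just above or just below the integer $P_n$.

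The main obstacle is exactly this sign alternation. When $r$ is even, $r(k-1)$ is even for every $k$, the per-term comparison has a single fixed direction, and one telescoping immediately gives $S_n \ge 1/P_n$; pairing this with a companion telescoping using $\phi'(k) = 1/(P_k - 1)$ yields the matching upper bound and pins the floor down cleanly. When $r$ is odd, however, $r(k-1)$ alternates in parity along the tail, so no single telescoping works and the per-term errors alternate in sign. I would handle this by grouping the terms of $S_n$ in consecutive pairs, so that each pair contributes a net discrepancy of one definite sign whose size is controlled by $f_r^2$ against the rapidly growing products $f_{r(k-1)}f_{r(k+1)}$; summing the resulting monotone bound determines on which side of $P_n$ the reciprocal sum falls. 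This bookkeeping, together with the positivity and summability guaranteed by $s \ge t \ge 1$, is where the real work lies. For the squared sum in the second statement I would run the same scheme with $\phi(k) = 1/(f_{rk}(s,1)^2 - f_{r(k-1)}(s,1)^2)$, using the restriction $t=1$ to factor the differences of squares through identities of $f_a^2 - f_b^2$ type that are clean only at $t=1$; this reduces the per-term comparison to an analogous parity-governed identity and reproduces the same correction $\delta_{2\mathbb{Z}}(r(n-1))$.
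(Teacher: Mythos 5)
First, a point of orientation: the paper does not actually prove this statement --- it is Theorem 17, quoted from \cite{ACMS} as background for Conjecture 2; the only related proof in the paper is that of Theorem 18 (the $(s,-t)$ analogue for large $s,t$), which follows the ACMS template. Your setup is correct as far as it goes: the reduction of the floor computation to $\frac{1}{M_n+1} < S_n \le \frac{1}{M_n}$, the Catalan-type identity $f_m^2 - f_{m-d}f_{m+d} = (-t)^{m-d}f_d^2$ (which is exactly the $m=1$ case of the paper's Lemma 5), and the algebra showing that $\frac{1}{f_{rk}} \le \phi(k)-\phi(k+1)$ is equivalent to $f_{rk}^2 \le f_{r(k-1)}f_{r(k+1)}$ are all right. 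The genuine gap is in the odd-$r$ case. Your pairing argument --- even granting the unproven monotone decay of the error magnitudes $|e_k| = t^{r(k-1)}f_r^2/\bigl(f_{rk}P_kP_{k+1}\bigr)$ --- yields only one of the two inequalities needed: it tells you on which side of $1/P_n$ the tail $S_n$ falls. To pin down the floor you must also show that $S_n$ is not too far on that side, namely $S_n \le 1/(P_n-1)$ when $r(n-1)$ is even and $S_n > 1/(P_n+1)$ when $r(n-1)$ is odd. You supply this companion bound only for even $r$, and even there the per-term inequality for $\phi'(k)=1/(P_k-1)$, which unwinds to $t^{r(k-1)}f_r^2 \le f_{r(k+1)} - f_{r(k-1)} - 1$, is asserted rather than proved and needs a genuine growth estimate, since for $t \ge 2$ the left side grows like $t^{rk}$. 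For odd $r$ the companion bound is never addressed, and that is where the real content of the theorem lies. There are also degenerate cases your potentials cannot see: for $s=t=1$, $r=1$ one has $P_2 = f_2 - f_1 = 0$, so $\phi(2)$ is undefined, and when $P_n = 1$ the bound $1/(P_n-1)$ is vacuous; these need separate handling.

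The structural reason you run into the alternation problem is that you use only the $d=r$ (Catalan) specialization of the identity. The paper's Lemma 5 in full strength, $f_{rn}f_{r(n+m-1)} - f_{r(n-1)}f_{r(n+m)} = (-t)^{r(n-1)}f_r f_{rm}$, has its sign governed by $n$ alone, uniformly in $m$. Consequently, in the decomposition $B(n)\bigl(f_{rn}-f_{r(n-1)}\bigr) - 1 = \sum_{m\ge 1}\bigl(f_{rn}/f_{r(n+m)} - f_{r(n-1)}/f_{r(n+m-1)}\bigr)$ used in the paper's proof of Theorem 18 (and in ACMS's own proof), every summand has the same sign $(-1)^{r(n-1)}$, so no pairing or alternating-series argument is needed for any $r$, and the parity of $r(n-1)$ enters exactly once, which is precisely what produces the correction $\delta_{2\mathbb{Z}}(r(n-1))$. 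If you replace your per-term comparison by this two-index identity, your telescoping plan collapses into that standard argument and both bounds follow for all $r$ at once; as written, your proposal leaves the harder half of the odd-$r$ case open.
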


It is conjectured that there is an analogue of this theorem replacing $t$ with $-t$.

\begin{conj}[ACMS] \cite{ACMS}
If $s > t \ge 1$ with $(s, -t) \ne (2, -1)$ and $n, r \in \mathbb{Z}^+$ then 

\begin{align*}
\left\lfloor \left(\sum_{k = n}^\infty \frac{1}{f_{rk}(s, -t)}\right)^{-1} \right\rfloor = f_{rn}(s, -t) - f_{r(n - 1)}(s, -t) - 1.
\end{align*}

If $t = -1$ and $s, n, r \in \mathbb{Z}^+$ then

\begin{align*}
\left\lfloor \left(\sum_{k = n}^\infty \frac{1}{f_{rk}(s, -1)^2}\right)^{-1} \right\rfloor = f_{rn}(s, -1)^2 - f_{r(n - 1)}(s, -1)^2 - 1.
\end{align*}

\end{conj}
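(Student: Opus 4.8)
The plan is to convert the claimed floor value into a two‑sided estimate and then establish that estimate by a telescoping term‑by‑term comparison. Write $F_k = f_{rk}(s,-t)$, $b_n = f_{rn}(s,-t)-f_{r(n-1)}(s,-t) = F_n - F_{n-1}$, and $S_n = \sum_{k=n}^\infty 1/F_k$. For $b_n \ge 2$ the assertion $\lfloor S_n^{-1}\rfloor = b_n - 1$ is equivalent to the sandwich $\frac{1}{b_n} < S_n \le \frac{1}{b_n - 1}$. First I would record convergence: for $s > t \ge 1$ with $(s,-t)\neq(2,-1)$ the dominant root $X = \tfrac{s+\sqrt{s^2-4t}}{2}$ exceeds $1$, so $F_k$ grows geometrically and $b_k \to \infty$; the excluded pair $(2,-1)$ is exactly the case $f_n(2,-1)=n$, where the series diverges, which is precisely why it must be removed.

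The heart of the argument is a pair of per‑term telescoping inequalities valid for all $k \ge n$, namely
$$\frac{1}{b_k} - \frac{1}{b_{k+1}} \;\le\; \frac{1}{F_k} \;\le\; \frac{1}{b_k - 1} - \frac{1}{b_{k+1} - 1}.$$
Summing the left inequality over $k \ge n$ telescopes (using $b_k \to \infty$) to $S_n > 1/b_n$, and summing the right telescopes to $S_n \le 1/(b_n-1)$, which is the desired sandwich. After clearing denominators both inequalities collapse onto a single quantity: since $b_k = F_k - F_{k-1}$ one finds $F_k b_k - F_{k-1} b_{k+1} = F_k^2 - F_{k+1}F_{k-1}$, and the Catalan‑type identity $f_m^2 - f_{m+j}f_{m-j} = t^{\,m-j} f_j^2$ (read off from the Binet form, the product of the roots being $t$) gives $F_k^2 - F_{k+1}F_{k-1} = t^{\,r(k-1)} f_r^2 =: D_k > 0$. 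The left inequality is then equivalent to $D_k \ge 0$, i.e. the log‑concavity of $\{F_k\}$, which is immediate, while the right inequality is equivalent to
$$b_k + b_{k+1} \;=\; f_{r(k+1)}(s,-t) - f_{r(k-1)}(s,-t) \;\ge\; D_k + 1.$$

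Everything thus reduces to this last inequality, and proving it is the main obstacle. Both sides grow geometrically and the binding case is the smallest admissible index (e.g. $k=1$, where it reads $f_{2r} \ge f_r^2 + 1$, i.e. $f_r(g_r - f_r) \ge 1$ after the product formula $f_{2r} = f_r g_r$); I would establish it either by Binet estimates showing that $f_{r(k+1)} - f_{r(k-1)}$ dominates $t^{\,r(k-1)} f_r^2$, or by induction on $k$ via the recurrence, with the condition $s \ge 3$ (forced by excluding $(2,-1)$) supplying the needed slack. The one genuinely degenerate case is $n = r = 1$, where $b_1 - 1 = 0$ makes the partner $1/(b_k-1)$ undefined; there the claimed value is $0$ and one verifies directly that $S_1 > 1$. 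Finally, the squared statement follows by the identical scheme applied to $G_k = f_{rk}(s,-1)^2$ and $c_k = G_k - G_{k-1}$: the reduction is formally the same, log‑concavity of $\{G_k\}$ is just the square of that of $\{F_k\}$, and the analogue of the crux inequality becomes $f_{r(k+1)}^2 - f_{r(k-1)}^2 \ge f_r^2\left(f_{rk}^2 + f_{r(k+1)}f_{r(k-1)}\right) + 1$, which I expect to yield to the same growth estimate. I anticipate that the bookkeeping for this squared inequality, together with pinning down the exact finite set of small triples $(s,r,n)$ that must be checked by hand, will be the most delicate part.
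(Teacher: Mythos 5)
You have correctly identified that the statement reduces to a sandwich $\frac{1}{b_n} < S_n \le \frac{1}{b_n-1}$, and your telescoping scheme is a clean (arguably cleaner) reorganization of exactly what the paper does: the paper also proves the two bounds by term-by-term comparison and invokes the same ACMS identity $f_{rn}(s,-t)f_{r(n+m-1)}(s,-t) - f_{r(n-1)}(s,-t)f_{r(n+m)}(s,-t) = t^{r(n-1)}f_r(s,-t)f_{rm}(s,-t)$, of which your Catalan-type identity is the $m=1$ case. Your lower bound (the left telescoping inequality, equivalent to $D_k = t^{r(k-1)}f_r(s,-t)^2 > 0$) is complete and correct, and your algebraic reduction of the upper bound to $f_{r(k+1)}(s,-t) - f_{r(k-1)}(s,-t) \ge t^{r(k-1)}f_r(s,-t)^2 + 1$ is also correct, as is the analogous reduction for the squared sum.

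However, there is a genuine gap, and it sits precisely where the entire difficulty of this problem lives: the crux inequality is never proven. You only propose strategies (``Binet estimates'' or ``induction with slack from $s \ge 3$'') and defer the ``delicate bookkeeping.'' Note that the statement you are proving is labeled a \emph{Conjecture} in the paper: the paper's own result (Theorem 18) establishes it only for \emph{sufficiently large} $s$, exactly because its analogous crux inequality, $t^{r(n-1)}f_r(s,-t)f_{rm}(s,-t) \le f_{r(n+m)}(s,-t)$, could only be verified by a limiting argument as $s \to \infty$. So the unproven step in your proposal is not a routine detail; it is the open content of the conjecture. Moreover, your stated plan for closing it cannot work as described: growth-rate dominance fails to be uniform over the allowed parameters, because when $s = t+1$ the dominant root is $X = \frac{s+\sqrt{s^2-4t}}{2} = t$ and the other root is $Y = 1$, so both sides of your crux inequality grow at the same exponential rate $t^{rk}$ and the comparison is decided by constants rather than by rates (for instance $(s,t) = (3,2)$, where $f_n(3,-2) = 2^n - 1$ and the inequality holds only after exact computation). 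Likewise, the assertions that ``the binding case is the smallest admissible index'' and that only a finite set of small triples $(s,r,n)$ needs hand-checking are themselves unproven claims, not consequences of anything you established. A smaller issue: for the squared sum, the hypothesis ``$s,n,r \in \mathbb{Z}^+$'' must implicitly exclude $s \le 2$, since $f_n(2,-1) = n$ makes $\sum_{k \ge n} 1/(rk)^2$ converge to a value for which the claimed formula is false once $n \ge 2$; your write-up, like the paper's statement, does not address this. In summary: you have a correct and sharp reduction of the conjecture to a concrete, numerically plausible inequality, but not a proof.
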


We show that the conjecture holds for sufficiently large $s, t$.

\begin{thm}
Let $s > t \ge 1$ be a pair of integers with $(s, -t) \ne (2, -1)$ and $n, r \in \mathbb{Z}^+$. If $s, t$ are sufficiently large, we have that 

\begin{align*}
\left\lfloor \left(\sum_{k = n}^\infty \frac{1}{f_{rk}(s, -t)}\right)^{-1} \right\rfloor = f_{rn}(s, -t) - f_{r(n - 1)}(s, -t) - 1.
\end{align*}

Let $t = -1$ and $s, n, r \in \mathbb{Z}^+$. If $s$ is sufficiently large, we have that

\begin{align*}
\left\lfloor \left(\sum_{k = n}^\infty \frac{1}{f_{rk}(s, -1)^2}\right)^{-1} \right\rfloor = f_{rn}(s, -1)^2 - f_{r(n - 1)}(s, -1)^2 - 1.
\end{align*}

\end{thm}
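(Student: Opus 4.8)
The plan is to convert the floor identity into a two-sided estimate on the tail sum and then establish each side by a telescoping comparison. Write $S = \sum_{k=n}^\infty \frac{1}{f_{rk}(s,-t)}$ and $b_n = f_{rn}(s,-t) - f_{r(n-1)}(s,-t) - 1$, suppressing the arguments $(s,-t)$ from now on. Since $b_n$ and $b_n + 1 = f_{rn} - f_{r(n-1)}$ are consecutive positive integers, the claim $\lfloor S^{-1}\rfloor = b_n$ is equivalent to the sandwich
$$\frac{1}{f_{rn} - f_{r(n-1)}} < S \le \frac{1}{f_{rn} - f_{r(n-1)} - 1}.$$
First I would observe that the series converges: for $s$ large the dominant root $X = \frac{s + \sqrt{s^2 - 4t}}{2}$ of $x^2 - sx + t$ satisfies $X > 1$ and grows with $s$, so $f_{rk}$ grows geometrically. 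This is also where the excluded degenerate case $(s,-t) = (2,-1)$, for which $f_n = n$ and $S$ diverges, is ruled out.

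For the upper bound I would try to prove the termwise telescoping inequality
$$\frac{1}{f_{rk}} \le \frac{1}{f_{rk} - f_{r(k-1)} - 1} - \frac{1}{f_{r(k+1)} - f_{rk} - 1}$$
for every $k \ge n$ and then sum from $k = n$ to $\infty$; the subtracted terms telescope and the tail $\frac{1}{f_{r(m+1)} - f_{rm} - 1} \to 0$, leaving exactly $S \le \frac{1}{f_{rn} - f_{r(n-1)} - 1}$. For the lower bound I would prove the reversed inequality with the shift removed,
$$\frac{1}{f_{rk}} \ge \frac{1}{f_{rk} - f_{r(k-1)}} - \frac{1}{f_{r(k+1)} - f_{rk}},$$
and sum it the same way to get $S \ge \frac{1}{f_{rn} - f_{r(n-1)}}$, the inequality being strict because the termwise bound is strict.

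To verify these two termwise inequalities I would use that the subsequence $\{f_{rk}\}_k$ itself satisfies a second-order linear recurrence $f_{r(k+1)} = g_r(s,-t)\, f_{rk} - t^r f_{r(k-1)}$, whose coefficients are the generalized Lucas number $g_r(s,-t) = X^r + Y^r$ and the product $t^r = (XY)^r$. This lets me eliminate $f_{r(k+1)}$ and reduce each telescoping inequality, after clearing denominators, to a polynomial inequality in the two consecutive quantities $f_{r(k-1)}$ and $f_{rk}$, equivalently in their ratio $\rho_k = f_{rk}/f_{r(k-1)}$, which tends to $X^r$. I expect the verification of these polynomial inequalities to be the main obstacle: unlike the positive-$t$ case of Theorem 17, the sign of $-t$ makes the contribution of the subdominant root $Y$ and of the correction constant alternate, so the error terms no longer have a fixed sign and cannot be absorbed for all $s > t \ge 1$. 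The role of the ``sufficiently large $s,t$'' hypothesis is to force $X^r$ (hence $\rho_k$ and $g_r$) to dominate $t^r$ and the lower-order constants, so that each polynomial inequality collapses to its leading-order comparison, which then holds with room to spare; making ``sufficiently large'' explicit and uniform in $k, n, r$ is the technical heart of the argument.

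Finally, the squared statement for $t = -1$ is handled in the same way with $f_{rk}^2$ in place of $f_{rk}$ and target $f_{rn}^2 - f_{r(n-1)}^2 - 1$: I would prove the analogous shifted and shift-free telescoping inequalities for $\frac{1}{f_{rk}^2}$, using that $\{f_{rk}^2\}_k$ satisfies a third-order linear recurrence (the symmetric square of the second-order one) to reduce to polynomial inequalities in consecutive terms. The same large-$s$ dominance makes these hold, yielding the sandwich
$$\frac{1}{f_{rn}^2 - f_{r(n-1)}^2} < \sum_{k=n}^\infty \frac{1}{f_{rk}^2} \le \frac{1}{f_{rn}^2 - f_{r(n-1)}^2 - 1},$$
and hence the claimed floor.
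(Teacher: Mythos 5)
Your overall frame --- reducing the floor identity to the sandwich $\frac{1}{f_{rn}-f_{r(n-1)}} < S \le \frac{1}{f_{rn}-f_{r(n-1)}-1}$ and obtaining each side by summing a termwise telescoping inequality --- is sound, and it is a genuinely different route from the paper's: the paper instead expands the products $B(n)f_{rn}$ and $B(n)f_{r(n-1)}$, compares the $k=n+m$ term of the first against the $k=n+m-1$ term of the second, and settles the resulting inequalities with the lemma from \cite{ACMS} quoted before the proof (namely $f_{rn}f_{r(n+m-1)} - f_{r(n-1)}f_{r(n+m)} = (-t)^{r(n-1)}f_r f_{rm}$) plus an explicit limit computation in the roots. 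The problem is that your proposal stops exactly where the theorem's content begins: the two termwise inequalities are never proved. You state them, note that after clearing denominators they become polynomial inequalities in consecutive terms, and then declare that you ``expect the verification of these polynomial inequalities to be the main obstacle'' and that making the large-$s$ dominance ``explicit and uniform in $k,n,r$ is the technical heart of the argument.'' That heart is missing, and it cannot be waved through: as you yourself observe, the two sides of each telescoping inequality agree to leading order in $X$, so no soft ``$X^r$ dominates'' argument can decide their sign. This is a genuine gap, not a routine omission.

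For what it is worth, your route can be completed, and rather cleanly, with the same ACMS lemma the paper uses. Write $a = f_{r(k-1)}$, $b = f_{rk}$, $c = f_{r(k+1)}$ (arguments $(s,-t)$ suppressed; all terms positive and strictly increasing, as shown in the paper). Clearing denominators shows your shift-free inequality $\frac{1}{b} \ge \frac{1}{b-a} - \frac{1}{c-b}$ is equivalent to $ac \le b^2$, and the $m=1$ case of the lemma gives $b^2 - ac = t^{r(k-1)}f_r(s,-t)^2 > 0$; so your lower bound holds strictly for every $s > t \ge 1$, with no largeness hypothesis at all. Your shifted inequality $\frac{1}{b} \le \frac{1}{b-a-1} - \frac{1}{c-b-1}$ is equivalent to $b^2 - ac \le c - a - 1$, i.e.\ to the family $t^{r(k-1)}f_r(s,-t)^2 \le f_{r(k+1)} - f_{r(k-1)} - 1$ for all $k \ge 1$; this, uniformly in $k$, is exactly where ``sufficiently large'' must be spent, and it is the precise analogue of the inequality $t^{r(n-1)}f_r f_{rm} \le f_{r(n+m)}$ that the paper attacks with its limit argument. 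The same algebra in the squared case reduces your lower bound again to $ac \le b^2$ and your upper bound to $c^2 - a^2 - 1 \ge f_r(s,-1)^2\,(b^2 + ac)$, the counterpart of the paper's $X^2/2 > 1$ computation. Until those upper-bound families are actually established (say via the Binet form, using $0 < Y \le 1$ and $X > t$ for large $s$), and the degenerate case $n=r=1$ (where $f_{rn}-f_{r(n-1)}-1 = 0$, so the right side of your sandwich and your first telescoping term are undefined) is handled separately, what you have is a correct plan rather than a proof.
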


We will use the following result in \cite{ACMS} in the proof of this theorem.

\begin{lemma} [ACMS] \cite{ACMS}
Let $r, m, n \in  \mathbb{P}$ and $s, t$ be arbitrary integers. We have $f_{rn}(s, t) f_{r(n + m - 1)}(s, t) - f_{r(n - 1)}(s, t) f_{r(n + m)}(s, t) = (-t)^{r(n - 1)} f_r(s, t) f_{rm}(s, t)$.
\end{lemma}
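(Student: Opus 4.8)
The plan is to prove this Catalan-type identity by passing to the Binet closed form for the generalized Fibonacci polynomials. The characteristic equation of the recurrence is $x^2 - sx - t = 0$, with roots $X, Y = \frac{s \pm \sqrt{s^2 + 4t}}{2}$ satisfying $X + Y = s$, $XY = -t$, and $f_n(s,t) = \frac{X^n - Y^n}{X - Y}$. Writing $\Delta = X - Y$, $a = X^r$, and $b = Y^r$, every term appearing in the identity collapses to $f_{rk}(s,t) = \frac{a^k - b^k}{\Delta}$, so the entire left-hand side becomes a single symmetric expression in $a$, $b$, and $\Delta$.

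First I would clear denominators: the left-hand side equals $\frac{1}{\Delta^2}\left[(a^n - b^n)(a^{n+m-1} - b^{n+m-1}) - (a^{n-1} - b^{n-1})(a^{n+m} - b^{n+m})\right]$. Expanding both products, the leading pure powers $a^{2n+m-1}$ and $b^{2n+m-1}$ cancel in pairs, and the four surviving cross terms group, after pulling out $a^{n-1}b^{n-1}$, as $(ab)^{n-1}(a - b)(a^m - b^m)$. This factorization is the heart of the argument. It then remains to translate each factor back: $a - b = X^r - Y^r = \Delta f_r(s,t)$, $a^m - b^m = X^{rm} - Y^{rm} = \Delta f_{rm}(s,t)$, and $(ab)^{n-1} = (XY)^{r(n-1)} = (-t)^{r(n-1)}$. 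Substituting yields $\frac{1}{\Delta^2}\cdot \Delta f_r(s,t) \cdot (-t)^{r(n-1)} \cdot \Delta f_{rm}(s,t) = (-t)^{r(n-1)} f_r(s,t) f_{rm}(s,t)$, which is exactly the claimed right-hand side.

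The one subtlety I would treat carefully is the degenerate case $s^2 + 4t = 0$, where $X = Y$, $\Delta = 0$, and the Binet formula (together with the division by $\Delta^2$) is not literally valid. I would dispose of this by observing that both sides of the asserted equation are elements of $\mathbb{Z}[s,t]$, and the computation above is a genuine identity in the field $\mathbb{Q}(s, \sqrt{s^2+4t})$; it therefore holds as a polynomial identity on the set where the discriminant is nonzero, which is Zariski dense, and so the identity holds for all $s, t$, in particular for the arbitrary integers in the statement. I expect this bookkeeping, together with the careful expansion and regrouping of the cross terms, to be the only real obstacle, since the factorization itself is clean. As an alternative that avoids the discriminant entirely, one could fix $r$ and $n$ and induct on $m$, using the two-term recurrence satisfied by the subsequence $\{f_{rk}\}$ (whose characteristic roots are $a = X^r$ and $b = Y^r$), with base case $m = 1$ the standard Catalan identity $f_{rn}(s,t)^2 - f_{r(n-1)}(s,t)f_{r(n+1)}(s,t) = (-t)^{r(n-1)}f_r(s,t)^2$.
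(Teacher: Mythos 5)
Your proof is correct; note that the paper itself gives no argument for this lemma at all --- it is imported verbatim from \cite{ACMS} as a known result --- so there is no in-paper proof to compare against, and your write-up actually supplies one. The Binet-form computation checks out: with $a = X^r$, $b = Y^r$, $\Delta = X - Y$, the pure powers $a^{2n+m-1}$, $b^{2n+m-1}$ cancel and the four cross terms do collapse to $a^{n-1}b^{n-1}(a-b)(a^m-b^m)$, after which $a - b = \Delta f_r(s,t)$, $a^m - b^m = \Delta f_{rm}(s,t)$, and $(ab)^{n-1} = (XY)^{r(n-1)} = (-t)^{r(n-1)}$ give exactly the right-hand side (and, replacing $t$ by $-t$, exactly the form $t^{r(n-1)} f_r(s,-t) f_{rm}(s,-t)$ in which the paper invokes the lemma in Theorem 18). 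Your treatment of the degenerate case $s^2 + 4t = 0$ is sound but can be streamlined: simply treat $s$ and $t$ as indeterminates, so that $s^2 + 4t$ is a nonzero element of $\mathbb{Q}(s,t)$ and the whole computation is a legitimate identity of rational functions in the quadratic extension $\mathbb{Q}(s,t)\bigl(\sqrt{s^2+4t}\bigr)$; since both sides lie in $\mathbb{Z}[s,t]$, the polynomial identity follows outright and specializes to every integer pair, with no Zariski-density argument needed. One caveat on your alternative route: the base case $m = 1$ of the proposed induction is itself the Catalan identity $f_{rn}^2 - f_{r(n-1)}f_{r(n+1)} = (-t)^{r(n-1)} f_r^2$, which is no more elementary than the general statement and would need its own proof, so the Binet argument you lead with is the one to keep.
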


Now we begin the proof of Theorem 18.

\begin{proof}

We will first show that each of the two series $\sum_{k = n}^\infty \frac{1}{f_{rk}(s, -t)}$ and $\sum_{k = n}^\infty \frac{1}{f_{rk}(s, -1)^2}$ converges under the given conditions. By Proposition 1.1 of \cite{ACMS}, we have that$f_n(s, -t) = \frac{X^n - Y^n}{X - Y}$, where $X = \frac{s + \sqrt{s^2 - 4t}}{2}$ and $Y = \frac{s - \sqrt{s^2 - 4t}}{2}$. This means that the ratio of consecutive terms of the sum $\sum_{k = n}^\infty \frac{1}{f_{rk}(s, -t)}$ is of the form $\frac{X^p - Y^p}{X^{p + r} - Y^{p + r}}$ for some $p \ge rn$. Similarly, we have that the ratio of consecutive terms of the sum $\sum_{k = n}^\infty \frac{1}{f_{rk}(s, -1)^2}$ is of the form $\left(\frac{X^p - Y^p}{X^{p + r} - Y^{p + r}}\right)^2$ for some $p \ge rn$. Note that $\lim_{p \to \infty} \frac{X^p - Y^p}{X^{p + r} - Y^{p + r}} = \frac{1}{X^r}$. Since $X > 1$ for all pairs $(s, -t) \ne (2, -1)$, it follows that the two series $\sum_{k = n}^\infty \frac{1}{f_{rk}(s, -t)}$ and $\sum_{k = n}^\infty \frac{1}{f_{rk}(s, -1)^2}$ both converge. \\

We now examine the sum $\sum_{k = n}^\infty \frac{1}{f_{rk}(s, -t)}$. Let $B(n) = \sum_{k = n}^\infty \frac{1}{f_{rk}(s, -t)}$. Note that \\ $\left\lfloor \left(\sum_{k = n}^\infty \frac{1}{f_{rk}(s, -t)}\right)^{-1} \right\rfloor = f_{rn}(s, -t) - f_{r(n - 1)}(s, -t) - 1$ if and only if $f_{rn}(s, -t) - f_{r(n - 1)}(s, -t) - 1 \le \frac{1}{B(n)} < f_{rn}(s, -t) - f_{r(n - 1)}(s, -t)$. \\

The proof that $\frac{1}{B(n)} < f_{rn}(s, -t) - f_{r(n - 1)}(s, -t)$ is very similar to the corresponding proof in Theorem 2 for $f_n(s, t)$ (see \cite{ACMS}). It suffices to show that $1 < B(n)(f_{rn}(s, -t) - f_{r(n - 1)}(s, -t))$. Note that the first term of the product $B(n) f_{rn}(s, -t)$ is equal to 1, which we subtract from both sides of the inequality. Now we will compare the term for $k = n + m$ ($m \ge 1$) in $f_{rn}(s, -t) B(n)$ with the term for $k = n + m - 1$ in $f_{r(n - 1)}(s, -t) B(n)$ and see that it suffices to show that

\begin{align*}
0 < \frac{f_{rn}(s, -t)}{f_{r(n + m)}(s, -t)} - \frac{f_{r(n - 1)}(s, -t)}{f_{r(n + m - 1)}(s, -t)}. 
\end{align*}

By Lemma 1, we have that \[f_{rn}(s, -t) f_{r(n + m - 1)}(s, -t) - f_{r(n - 1)}(s, -t) f_{r(n + m)}(s, -t) = t^{r(n - 1)} f_r(s, -t) f_{rm}(s, -t).\] Dividing by $f_{r(n + m -1)}(s, -t) f_{r(n + m)}(s, -t)$, we have that \[\frac{f_{rn}(s, -t)}{f_{r(n + m)}(s, -t)} - \frac{f_{r(n - 1)}(s, -t)}{f_{r(n + m - 1)}(s, -t)} = t^{r(n - 1)} \frac{f_r(s, -t) f_{rm}(s, -t)}{f_{r(n + m)}(s, -t) f_{r(n + m - 1)}(s, -t)}\]. We claim that $f_k(s, -t) > 0$ for all $k \ge 1$. It suffices to show that the sequence $\{f_k(s, -t)\}$ is monotonically increasing since $f_1(s, -t) = 1$. This can be proved by induction. We have $f_1(s, -t) = 1$ and $f_2(s, -t) = s \ge 3$. Assume that $f_n \ge f_{n - 1}$. Then, $f_{n + 1} - f_n = (s - 1)f_n - tf_{n - 1} \ge (s - 1)f_n - (s - 1)f_{n - 1} = (s - 1)(f_n - f_{n - 1}) \ge 0$. Thus, $f_k(s, -t) > 0$ for all $k \ge 1$. This means that 

\begin{align*}
\frac{f_{rn}(s, -t)}{f_{r(n + m)}(s, -t)} - \frac{f_{r(n - 1)}(s, -t)}{f_{r(n + m - 1)}(s, -t)} = t^{r(n - 1)} \frac{f_r(s, -t) f_{rm}(s, -t)}{f_{r(n + m)}(s, -t) f_{r(n + m - 1)}(s, -t)} > 0. 
\end{align*} 

As for the other bound, the same procedure as the one used in the previous paragraph can be used to show that proving this bound reduces to showing that

\begin{align*}
\frac{f_{rn}(s, -t)}{f_{r(n + m)}(s, -t)} - \frac{f_{r(n - 1)}(s, -t)}{f_{r(n + m - 1)}(s, -t)} \le \frac{1}{f_{r(n + m - 1)}(s, -t)}.
\end{align*} 

\noindent Cross-multiplying and using Lemma 1, we find that it suffices to show that \[t^{r(n - 1)} \le \frac{1}{f_r(s, -t)} \frac{f_{r(n + m)}(s, -t)}{f_{rm}(s, -t)}\]. In \cite{ACMS}, it was shown that \[t^{r(n - 1)} \le \frac{1}{t} \frac{1}{f_r(s, t)} \frac{f_{r(n + m)}(s, t)}{f_{rm}(s, t)} \le \frac{1}{t} \frac{1}{f_r(s, -t)} \frac{f_{r(n + m)}(s, t)}{f_{rm}(s, t)}\]. Since $\frac{1}{f_r(s, -t)} \ge \frac{1}{f_r(s, t)}$ for all $r, s, t \in \mathbb{Z}^+$, it suffices to show that \[\frac{1}{t} \frac{f_{r(n + m)}(s, t)}{f_{rm}(s, t)} \le \frac{f_{r(n + m)}(s, -t)}{f_{rm}(s, -t)}\] for sufficiently large $s, t$. Let $X_1 = \frac{s + \sqrt{s^2 - 4t}}{2}$, $Y_1 = \frac{s - \sqrt{s^2 - 4t}}{2}$, $X_2 = \frac{s + \sqrt{s^2 + 4t}}{2}$, and $Y_2 = \frac{s - \sqrt{s^2 + 4t}}{2}$. Rewriting the inequality in terms of the $X_i$ and $Y_i$, we claim that \[\frac{1}{t} \frac{X_2^{r(n + m)} - Y_2^{r(n + m)}}{X_2^{rm} - Y_2^{rm}} \le \frac{X_1^{r(n + m)} - Y_1^{r(n + m)}}{X_1^{rm} - Y_1^{rm}}\] for sufficiently large $s, t$. We have \[\frac{1}{t} \frac{X_2^{r(n + m)} - Y_2^{r(n + m)}}{X_2^{rm} - Y_2^{rm}} \le \frac{X_1^{r(n + m)} - Y_1^{r(n + m)}}{X_1^{rm} - Y_1^{rm}}\] if and only if \[\frac{(X_2^{r(n + m)} - Y_2^{r(n + m)})(X_1^{rm} - Y_1^{rm})}{(X_1^{r(n + m)} - Y_1^{r(n + m)})(X_2^{rm} - Y_2^{rm})} \le t\]. Note that $\lim_{s \to \infty} \frac{X_1}{X_2} = 1$ and $\lim_{s \to \infty} \frac{Y_1}{Y_2} = -1$. We obtain the first limit as follows. Since $s > t$, we have $\frac{s + \sqrt{s^2 - 4s}}{s + \sqrt{s^2 + 4s}} < \frac{X_1}{X_2} < 1$. Since $\lim_{s \to \infty} \frac{s + \sqrt{s^2 - 4s}}{s + \sqrt{s^2 + 4s}} = 1$, we have  $\lim_{s \to \infty} \frac{s + \sqrt{s^2 - 4t}}{s + \sqrt{s^2 + 4t}} = 1$ for all $t < s$. We also have that $\lim_{s \to \infty} \frac{Y_i}{X_i} = 0$. Rewriting \[\frac{(X_2^{r(n + m)} - Y_2^{r(n + m)})(X_1^{rm} - Y_1^{rm})}{(X_1^{r(n + m)} - Y_1^{r(n + m)})(X_2^{rm} - Y_2^{rm})}\] in terms of $\frac{X_2}{X_1}$, $\frac{Y_2}{Y_1}$, and $\frac{Y_i}{X_i}$, we have the following.

\begin{align*}
\frac{(X_2^{r(n + m)} - Y_2^{r(n + m)})(X_1^{rm} - Y_1^{rm})}{(X_1^{r(n + m)} - Y_1^{r(n + m)})(X_2^{rm} - Y_2^{rm})} \\= \frac{\left(1 - \left(\frac{Y_2}{X_2}\right)^{r(n + m)}\right) \left(1 - \left(\frac{Y_1}{X_1}\right)^{rm} \right)}{\left(\left(\frac{X_1}{X_2}\right)^{r(n + m)} - \left(\frac{Y_2}{X_2}\right)^{r(n + m)} \left(\frac{Y_1}{Y_2}\right)^{r(n + m)}\right) \left(\left(\frac{X_2}{X_1}\right)^{rm} - \left(\frac{Y_1}{X_1}\right)^{rm} \left(\frac{Y_2}{Y_1}\right)^{rm}\right)}
\end{align*}

Using the limits given above, we find that \[\lim_{s \to \infty} \frac{(X_2^{r(n + m)} - Y_2^{r(n + m)})(X_1^{rm} - Y_1^{rm})}{(X_1^{r(n + m)} - Y_1^{r(n + m)})(X_2^{rm} - Y_2^{rm})} = 1\]. Thus, the inequality \[\frac{(X_2^{r(n + m)} - Y_2^{r(n + m)})(X_1^{rm} - Y_1^{rm})}{(X_1^{r(n + m)} - Y_1^{r(n + m)})(X_2^{rm} - Y_2^{rm})} \le t\] holds for any $t \ge 2$ for sufficiently large $s$. \\


Now we turn to the sum $\sum_{k = n}^\infty \frac{1}{f_{rk}(s, -1)^2}$. Let $C(n) = \sum_{k = n}^\infty \frac{1}{f_{rk}(s, -1)^2}$. Note that \\ $\left\lfloor \left(\sum_{k = n}^\infty \frac{1}{f_{rk}(s, -t)^2}\right)^{-1} \right\rfloor = f_{rn}(s, -t)^2 - f_{r(n - 1)}(s, -t)^2 - 1$ if and only if \[f_{rn}(s, -t)^2 - f_{r(n - 1)}(s, -t)^2 - 1 \le \frac{1}{C(n)} < f_{rn}(s, -t)^2 - f_{r(n - 1)}(s, -t)^2\]. \\

The proof that $\frac{1}{C(n)} < f_{rn}(s, -1)^2 - f_{r(n - 1)}(s, -1)^2$ is also very similar to the corresponding proof of Theorem 2. It suffices to show that $1 < C(n)(f_{rn}(s, -1)^2 - f_{r(n - 1)}(s, -1)^2)$. Note that the first term of the product $C(n) f_{rn}(s, -1)^2$ is equal to 1, which we subtract from both sides of the inequality. Now we will compare the term for $k = n + m$ ($m \ge 1$) in $f_{rn}(s, -t)^2 C(n)$ with the term for $k = n + m - 1$ in $f_{r(n - 1)}(s, -t)^2 C(n)$ and see that it suffices to show that

\begin{align*}
0 < \left(\frac{f_{rn}(s, -1)}{f_{r(n + m)}(s, -1)}\right)^2 - \left(\frac{f_{r(n - 1)}(s, -1)}{f_{r(n + m - 1)}(s, -1)}\right)^2. 
\end{align*}

From the proof of the first part of the theorem, we have $\frac{f_{rn}(s, -t)}{f_{r(n + m)}(s, -t)} > \frac{f_{r(n - 1)}(s, -t)}{f_{r(n + m - 1)}(s, -t)}$ and we can obtain the above inequality by squaring both sides and setting $t = 1$. \\

Consider the difference between the term for $k = n + m$ in $f_{rn}(s, -1)^2 C(n)$ and the term for $k = n + m - 1$ in $f_{r(n - 1)}(s, -1)^2 C(n)$. After replacing each fraction in the inequality \[\frac{f_{rn}(s, -1)}{f_{r(n + m)}(s, -1)} - \frac{f_{r(n - 1)}(s, -1)}{f_{r(n + m - 1)}(s, -1)} \le \frac{1}{f_{r(n + m - 1)}(s, -1)}\] with its square and clearing denominators, we have 

\begin{align*}
f_{rn}(s, -1)^2 f_{r(n + m - 1)}(s, -1)^2 - f_{r(n - 1)}(s, -1)^2 f_{r(n + m)}(s, -1)^2 \le f_{rn + rm}(s, -1)^2.
\end{align*}

If this inequality is satisfied for all $m \in \mathbb{Z}^+$, then the difference between the term for $k = n + m$ in $f_{rn}(s, -1)^2 C(n)$ and the term for $k = n + m - 1$ in $f_{r(n - 1)}(s, -1)^2 C(n)$ is always positive. If it is not satisfied for any $m \in \mathbb{Z}^+$, this difference is always negative. Let $X = \frac{s + \sqrt{s^2 - 4}}{2}$ and $Y = \frac{s - \sqrt{s^2 - 4}}{2}$. Note that $\lim_{s \to \infty} \frac{Y}{X} = 0$. \\

By Lemma 1, we have that \[f_{rn}(s, -1)^2 f_{r(n + m - 1)}(s, -1)^2 - f_{r(n - 1)}(s, -1)^2 f_{r(n + m)}(s, -1)^2 \le f_{rn + rm}(s, -1)^2\] if and only if \[f_r(s, -1) f_{rm}(s, -1) (f_{rn}(s, -1) f_{r(n + m - 1)}(s, -1) + f_{r(n - 1)}(s, -1) f_{r(n + m)}(s, -1))  \le f_{rn + rm}(s, -1)^2\]. This holds if and only if

\begin{align*}
\frac{1}{f_{rn}(s, -1) f_{r(n + m - 1)}(s, -1) + f_{r(n - 1)}(s, -1) f_{r(n + m)}(s, -1)} \frac{f_{rn + rm}(s, -1)}{f_r(s, -1)} \frac{f_{rn + rm}(s, -1)}{f_{rm}(s, -1)} \ge 1.
\end{align*}

We have that $f_n(s, -1) = \frac{X^n - Y^n}{X - Y}$ \cite{ACMS}. Since $\lim_{s \to \infty} \frac{Y}{X} = 0$, we have $\lim_{s \to \infty} f_n(s, -1) = X^{n - 1}$. This means that

\begin{flalign*}
\lim_{s \to \infty} \frac{1}{f_{rn}(s, -1) f_{r(n + m - 1)}(s, -1) + f_{r(n - 1)}(s, -1) f_{r(n + m)}(s, -1)} \frac{f_{rn + rm}(s, -1)}{f_r(s, -1)} \frac{f_{rn + rm}(s, -1)}{f_{rm}(s, -1)} \\= \frac{X^{2rn + 2rm - r - rm}}{X^{rn - 1} X^{r(n + m - 1) - 1} + X^{r(n - 1) - 1} X^{r(n + m) - 1}} \\
= \text{        } \frac{X^{2rn + 2rm - r - rm}}{X^{rn - 1 + rn + rm - r - 1} + X^{rn - r - 1 + rn + rm - 1}} \\
= \text{     } \frac{X^{2rn + rm - r}}{2X^{2rn + rm - r - 2}} \\
= \frac{X^2}{2}.
\end{flalign*}     

Since $X^2 > 2$ for sufficiently large $s$, we have

\begin{align*}
\frac{1}{f_r(s, -1) f_m(s, -1) (f_{rn}(s, -1) + f_{r(n + m - 1)}(s, -1))} \frac{f_{rn + rm}(s, -1)}{f_r(s, -1)} \frac{f_{rn + rm}(s, -1)}{f_m(s, -1)} > 1
\end{align*}

for sufficiently large $s$ for all $m \in \mathbb{Z}^+$. 



\end{proof}

\section{Periodicity modulo $n$}

Returning to periodicity of generalized Fibonacci sequences modulo $n$, we find a way to interpret the rank of a sequence mod $p$ as the order of an element of the splitting field of the characteristic polynomial of the recurrence in the case where $t = -1$. We also look at possible generalizations for other $t$ where $p \nmid t$ and the rank mod $p^k$. 

\begin{thm}
Let $X = \frac{s + \sqrt{s^2 - 4}}{2}$, $Y = \frac{s - \sqrt{s^2 - 4}}{2}$, and $D = s^2 -4$ be the discriminant of the characteristic polynomial of the recurrence with $D \not\equiv 0$ (mod $p$). If $(D/p) = 1$, then $r(p) = \frac{1}{2} ord(X)$ in $\mathbb {F}_p^{\times}$ if $ord(X)$ is even and $r(p) = ord(X)$ if $ord(X)$ is odd. Otherwise, we have that $r(p) = \frac{1}{2} ord(X)$ in $\mathbb {F}_{p^2}^{\times}$ if $ord(X)$ is even and $r(p) = ord(X)$ if $ord(X)$ is odd.
\end{thm}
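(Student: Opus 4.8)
The plan is to translate the divisibility condition $p \mid f_n(s,-1)$ into a statement about the multiplicative order of $X$ in the appropriate finite field, and then solve an elementary divisibility problem. First I would reduce the integer sequence $\{f_n(s,-1)\}$ modulo $p$. Since $p$ is odd and $D = s^2 - 4 \not\equiv 0 \pmod p$, the characteristic polynomial $x^2 - sx + 1$ of the recurrence has two distinct roots $X, Y$; these lie in $\mathbb{F}_p$ exactly when $(D/p) = 1$ (so that $\sqrt{D} \in \mathbb{F}_p$) and in $\mathbb{F}_{p^2}$ when $(D/p) = -1$. The Binet formula $f_n(s,-1) = \frac{X^n - Y^n}{X - Y}$ from Proposition 1.1 of \cite{ACMS} then holds verbatim over this field, with $X - Y = \sqrt{D}$ invertible because $D \not\equiv 0$. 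The crucial structural fact is that the constant term of $x^2 - sx + 1$ equals $1$, so $XY = 1$ and hence $Y = X^{-1}$.

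With this in hand, the key step is the chain of equivalences
\[ p \mid f_n(s,-1) \iff X^n = Y^n \iff X^n = X^{-n} \iff X^{2n} = 1 \iff \mathrm{ord}(X) \mid 2n, \]
where $\mathrm{ord}(X)$ is taken in $\mathbb{F}_p^{\times}$ or $\mathbb{F}_{p^2}^{\times}$ according to the value of $(D/p)$. Consequently $r(p)$, the least positive $n$ with $p \mid f_n(s,-1)$, equals the least positive $n$ for which $\mathrm{ord}(X) \mid 2n$.

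It then remains to solve this condition, which I would handle by cases on the parity of $d := \mathrm{ord}(X)$. If $d$ is even, write $d = 2d'$; then $d \mid 2n \iff d' \mid n$, so the least positive solution is $n = d' = \tfrac{1}{2}\mathrm{ord}(X)$. If $d$ is odd, then $\gcd(d,2) = 1$ forces $d \mid 2n \iff d \mid n$, so the least positive solution is $n = d = \mathrm{ord}(X)$. This reproduces exactly the two subcases appearing in the statement, and the same argument applies word-for-word in both fields, the only difference being whether $X$ is taken in $\mathbb{F}_p^{\times}$ or $\mathbb{F}_{p^2}^{\times}$.

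The main obstacle is not the order computation, which is routine, but justifying the field-theoretic reduction rigorously. One must verify that the reduction of the integer recurrence agrees with the recurrence over the finite field, that Binet's formula remains legitimate after reduction (which requires the invertibility of both $X - Y = \sqrt{D}$ and of $2$, each guaranteed by $D \not\equiv 0$ and $p$ odd), and that $X$ genuinely lies in $\mathbb{F}_{p^2}^{\times}$ rather than a larger extension in the non-residue case, so that $\mathrm{ord}(X)$ is well-defined there. I would also note that $X \neq 0$ in the field is immediate from $XY = 1$, so $\mathrm{ord}(X)$ makes sense in every case.
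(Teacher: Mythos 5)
Your proposal is correct and is essentially the paper's own argument: both translate $p \mid f_n(s,-1)$ into the equation $X^n = Y^n$ in $\mathbb{F}_p^{\times}$ or $\mathbb{F}_{p^2}^{\times}$ (according to $(D/p)$), use $XY = 1$ to rewrite this as $X^{2n} = 1$, and then split on the parity of $\mathrm{ord}(X)$. The only difference is how that translation is made --- the paper diagonalizes the recurrence matrix $U$ and characterizes $r(p)$ as the least $h$ with $U^h$ a scalar matrix, whereas you reduce Binet's formula modulo $p$ directly --- which is a cosmetic distinction, though your route has the small merit of spelling out why the reduction is legitimate ($\sqrt{D}$ and $2$ invertible, $X \neq 0$).
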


\begin{proof}
Consider the matrix for the recurrence $U = \begin{pmatrix} s&t\\ 1&0 \end{pmatrix}$. Note that multiplying $U$ by $\begin{pmatrix} f_n(s, -1) \\ f_{n + 1}(s, -1) \end{pmatrix}$ gives $\begin{pmatrix} f_{n + 1}(s, -1) \\ f_{n + 2}(s, -1) \end{pmatrix}$. If $(D/p) = 1$, we can consider the eigenvalues $X$ and $Y$ in $\mathbb{F}_p$. Since $D \not\equiv 0$ (mod $p$), the eigenvalues of the matrix $U$ are distinct and $U$ is diagonalizable. Thus, we can write $U = C \begin{pmatrix} X & 0 \\ 0 & Y \end{pmatrix} C^{-1}$ or $U = \begin{pmatrix} Y & 0 \\ 0 & X \end{pmatrix}$, where $C$ is an invertible matrix. This means that the rank is the smallest positive integer $h$ such that $U^h = \lambda I$ for some $\lambda \in \mathbb{F}_p$. In terms of the eigenvalues, it is the smallest positive integer $h$ such that $X^h = Y^h$ in $\mathbb{F}_p$. Since $X$ and $Y$ are roots of $x^2 - sx + 1$, we have that $XY = 1$. So, we can rewrite $X^h = Y^h$ as $X^h = X^{-h}$ and get $X^{2h} = 1$ in $\mathbb{F}_p$. So, $r(p) = \frac{1}{2} ord(X)$ if $ord(X)$ is even and $r(p) = ord(X)$ if $ord(X)$ is odd. \\

We can use a similar argument for the case where $(D/p) = -1$. However, $X$ and $Y$ cannot be considered in $\mathbb{F}_p$, so we look at the splitting field $\mathbb{F}_{p^2}$ of the characteristic polynomial instead. Note that $X$ and $Y$ are also distinct in $\mathbb{F}_{p^2}$ since $X^p = Y$ in $F_{p^2}$ and $Y \notin \mathbb{F}_p$. Using the same steps as above, we find that $r(p) = \frac{1}{2} ord(X)$ in $\mathbb F_{p^2}^{\times}$ if $ord(X)$ is even and $r(p) = ord(X)$ if $ord(X)$ is odd.
\end{proof}

\begin{rem}
The use of the splitting field of the characteristic polynomial in the proof of this theorem is similar to its use in \cite{GRS} to study periods of generalized Fibonacci sequences.
\end{rem}

\begin{rem}
The rank of generalized Fibonacci sequences is a special case of the restricted period of a general linear recurrence (see \cite{EPSW} for a definition). 
\end{rem} 

We now look at the distribution of $r(p)$ over different values of $s$. \\

\begin{center}
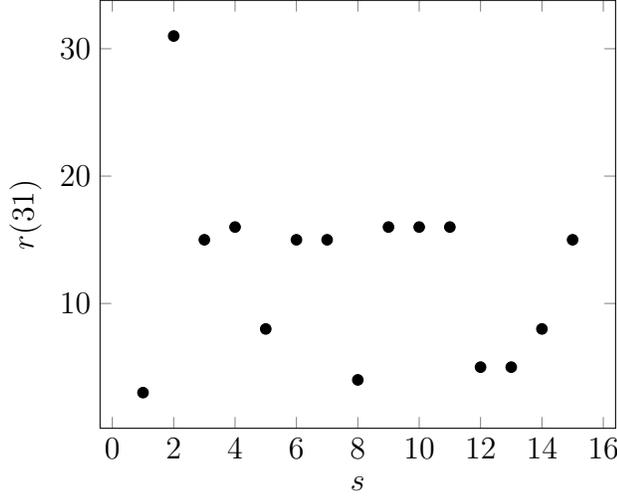
\begin{figure}
\caption{A plot of $r(31)$ for various values of $s$ when $t = -1$. Since the sequence starts with $0, 1, s, \ldots$, the terms of the sequence mod $p$ depend only on the residue of $s$ mod $p$. In addition, only $\frac{p - 1}{2}$ of the residues need to be considered since the terms of $f_n(-s, 1)$ mod $p$ are either identical or of opposite sign, which does not change the rank of the sequence.}
\begin{tikzpicture}
		\tikzstyle{every node} = [font = \large]
	\begin{axis}[xlabel={$s$}, ylabel={$r(31)$},%
	clickable coords={(xy): \thisrow{label}},%
	scatter/classes={%
		a={mark=*,black},%
		b={mark=triangle*,red},%
		c={mark=o,draw=black}}]
	\addplot[scatter,only marks,%
		scatter src=explicit symbolic]%
	table[meta=label] {
x     y      label
1			3				a
2			31			a
3			15			a
4			16			a	
5			8				a
6			15			a
7			15			a
8			4				a
9			16			a
10		16			a
11		16			a
12		5				a	
13		5				a
14		8				a
15		15			a

	};
	\end{axis}
\end{tikzpicture}
\end{figure}
\pagebreak
\end{center}


As we can see above for $p = 31$, there is a large amount of clustering around $\frac{p \pm 1}{2}$ for primes $p$. This can be understood using the distribution of orders of elements of finite abelian groups since $r(p) = \frac{1}{2} ord(X)$ or $r(p) = ord(X)$ in $\mathbb{F}_p$ or $\mathbb{F}_{p^2}$. In the case where $(D/p) = 1$, we are looking at the orders of elements of the cyclic group $\mathbb{F}_p^{\times}$. Take a generator $x$ of $\mathbb{F}_p^{\times}$ and a divisor $d$ of $p - 1$. Then, $x^k$ is of order $\frac{p - 1}{\text{gcd($p - 1$, $k$)}}$. So, there are $\varphi(d)$ elements of order $d$. Since $\varphi(m) \le \varphi(n)$ for all $m | n$, this means that there are generally more elements which are of higher order than lower order although $\varphi(n)$ does not increase monotonically. Since we have that $X^{p - 1} = 1$ in $\mathbb{F}_p$, the order of $X$ divides $p - 1$ and the previous statement applies. In the case that $ord(X)$ is odd, $ord(X) \le \frac{p - 1}{2}$ and we can repeat the observations which we made earlier. \\ 

If $(D/p) = -1$, we can consider the order of $X$ in $\mathbb{F}_{p^2} = \{a + bX: a, b \in \mathbb{F}_p\}$. Since $X^p = Y$ in $\mathbb{F}_{p^2}$ and $XY = 1$, we have that $X^{p + 1} = 1$ in $\mathbb{F}_{p^2}$ and $ord(X) | p + 1$. In addition, $\mathbb{F}_{p^2}^{\times}$ is cyclic since multiplicative subgroups of finite fields are cyclic. As above, we have that the number of elements of order $d$ is $\varphi(d)$ and $\varphi(a) \le \varphi(b)$ for all $a | b$. This means that the largest number of elements have order $p + 1$ among elements whose orders divide $p + 1$. \\

Making more precise statements about the rank mod $p$ would involve looking at the order of an element in the multiplicative groups for the fields $\mathbb{F}_p$ and $\mathbb{F}_{p^2}$ more closely. In addition, we have yet to determine a relationship between $s$ and the rank. \\

Since the rank mod $p^k$ exists if and only if $p \nmid t$, it is possible to generalize the order arguments above for other $t$ not divisible by $p$. The more interesting generalization relates to rank modulo general prime powers $p^k$. In this case we would consider the order of $X$ in $\mathbb{Z}_{p^k}^{\times}$ or in $\mathbb{Z}_{p^k}[X]^{\times}$. Finding the order would be more complicated than for $r(p)$ since $\mathbb{Z}_{p^k}$ is not a field for $k > 1$. However, we can still find an upper bound for $r(p^k)$ using a counting argument. \\

\begin{thm}
For any $s, t \in \mathbb{Z}$ with $p \nmid t$, we have that $r(p^k) \le p^k + 1$.
\end{thm}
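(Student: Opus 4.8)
The plan is to recast the rank as the order of a single matrix, exactly as in the proof of Theorem 19, but now working over the ring $\mathbb{Z}/p^k\mathbb{Z}$ rather than over a field. First I would take the companion matrix $U = \begin{pmatrix} s & t \\ 1 & 0 \end{pmatrix}$ and record the standard identity $U^m = \begin{pmatrix} f_{m+1}(s,t) & t f_m(s,t) \\ f_m(s,t) & t f_{m-1}(s,t)\end{pmatrix}$. Since $\det U = -t$ is a unit modulo $p^k$ (this is exactly where $p \nmid t$ enters, and is why $r(p^k)$ exists by the argument following Theorem 9), $U$ lies in $GL_2(\mathbb{Z}/p^k\mathbb{Z})$. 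Reading off the lower-left entry, $p^k \mid f_m$ exactly when $f_m \equiv 0 \pmod{p^k}$, and when this happens $f_{m+1} = sf_m + t f_{m-1} \equiv t f_{m-1}$, so $U^m$ is a scalar matrix modulo $p^k$. Hence $r(p^k)$ is precisely the order of the image of $U$ in $\mathrm{PGL}_2(\mathbb{Z}/p^k\mathbb{Z})$, equivalently the least $m$ with $U^m v_0$ proportional to $v_0$ for $v_0 = \begin{pmatrix} 0 \\ 1 \end{pmatrix}$.

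Next I would bound this order by a pigeonhole count on the orbit of the line $[v_0]$ under $U$ acting on the projective line $\mathbb{P}^1(\mathbb{Z}/p^k\mathbb{Z})$. Because $U$ is invertible, $[v]\mapsto[Uv]$ is a bijection of this finite set, so the orbit of $[v_0]$ is a single cycle and must return to $[v_0]$ after a number of steps at most the size of that orbit. This already yields a finite bound $r(p^k) \le \bigl|\mathbb{P}^1(\mathbb{Z}/p^k\mathbb{Z})\bigr|$. To try to sharpen it, I would diagonalize $U$ over $A = (\mathbb{Z}/p^k\mathbb{Z})[x]/(x^2 - sx - t)$: its eigenvalues $X,Y$ satisfy $XY = -t \in A^\times$, so $r(p^k) = \mathrm{ord}(X/Y)$, and since the norm of $X/Y$ equals $1$, this element lies in the norm-one subgroup of $A^\times$. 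Splitting into the cases $(D/p)=1$, $(D/p)=-1$, and $p \mid D$ (matching Theorems 10 and 11) would then let me read off the relevant group orders.

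The main obstacle is the size of the constant, and I expect it to be genuinely problematic. The projective line over $\mathbb{Z}/p^k\mathbb{Z}$ has $p^k + p^{k-1}$ points, and in the inert case the norm-one subgroup of $A^\times$ has order $p^{k-1}(p+1) = p^k + p^{k-1}$; so both the counting argument and the group-order argument deliver $r(p^k) \le p^k + p^{k-1}$, which is strictly larger than the claimed $p^k+1$ once $k \ge 2$. The value $p^k+1$ is the size of $\mathbb{P}^1$ over the field $\mathbb{F}_{p^k}$, not over the ring $\mathbb{Z}/p^k\mathbb{Z}$, so obtaining it would require either an invariant subset of size $p^k+1$ inside $\mathbb{P}^1(\mathbb{Z}/p^k\mathbb{Z})$ or a reduction to the residue field — and I do not see how the stated constant survives the $p$-power part of the unit group. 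Before committing to $p^k+1$, I would sanity-check against the ordinary Fibonacci sequence modulo $p^2$: already at $p=7$ the rank modulo $49$ is $56 = p^2 + p$, which exceeds $p^2 + 1 = 50$. This strongly suggests that the correct right-hand side is the larger $p^k + p^{k-1}$, and that the proof should be read carefully at the point where the count collapses from $p^k + p^{k-1}$ to $p^k + 1$.
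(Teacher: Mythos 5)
Your proposal follows essentially the same route as the paper's own proof --- a pigeonhole count on scalar-equivalence classes of consecutive pairs $(f_n, f_{n+1})$ --- but you carry the count out correctly, and the paper does not. The paper's proof asserts that ``a single equivalence class contains $p^k - 1$ elements'' and divides the $(p^k)^2 - 1$ nonzero pairs by this to conclude $r(p^k) \le p^k + 1$. That is the arithmetic of the field $\mathbb{F}_{p^k}$, not of the ring $\mathbb{Z}/p^k\mathbb{Z}$: the invertible scalars modulo $p^k$ number $\varphi(p^k) = p^{k-1}(p-1)$, and the pairs that can actually occur are the primitive ones (consecutive terms are never both divisible by $p$: if $p \mid f_n$ and $p \mid f_{n+1}$ then $p \mid t f_{n-1}$, and since $p \nmid t$ this pushes divisibility back to $f_1 = 1$, a contradiction), of which there are $p^{2k} - p^{2k-2}$. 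The quotient is exactly your count $p^{k-1}(p+1) = p^k + p^{k-1}$, the size of $\mathbb{P}^1(\mathbb{Z}/p^k\mathbb{Z})$. The paper's remaining steps (distinctness of the classes visited before index $r(p^k)$) are sound, so the honest conclusion of its own method is $r(p^k) \le p^k + p^{k-1}$; the collapse to $p^k + 1$ that you flagged as suspicious is precisely the unjustified step.

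Moreover, your sanity check is decisive: the theorem as stated is false for $k \ge 2$. For the Fibonacci sequence $(s,t) = (1,1)$ one has $r(7) = 8$, and since $F_8 = 21$ is divisible by $7$ but not by $49$, the first index at which $49$ divides a Fibonacci number is $7 \cdot 8 = 56$, so $r(49) = 56 > 50 = 7^2 + 1$. Even simpler: $F_6 = 8$ is the first Fibonacci number divisible by $4$, so $r(4) = 6 > 5 = 2^2 + 1$. Note that $56 = 7^2 + 7$ and $6 = 2^2 + 2$, so the corrected bound $r(p^k) \le p^k + p^{k-1}$ --- which both your orbit count and your norm-one-subgroup argument over $A = (\mathbb{Z}/p^k\mathbb{Z})[x]/(x^2 - sx - t)$ do establish --- is sharp and cannot be improved in general. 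Only in the case $k = 1$ do the two bounds coincide, and that is the only case the paper's proof actually establishes.
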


\begin{proof}
Note that we can divide the set of possible pairs of terms modulo $p$ into equivalence classes where two pairs $(a, b)$ and $(c, d)$ belong to the same equivalence class if and only if $(c, d) \equiv (ka, kb)$ (mod $p$) for some $k$ such that $p \nmid k$. We claim that all pairs of consecutive terms $(f_n, f_{n + 1})$ with $n < r(p^k)$ belong to different equivalence classes. Assume that this is not the case. Then, there are two pairs $(a, b)$ and $(ka, kb)$ when considered modulo $p$ with $k \not\equiv 0,1$ (mod $p^k$). Since none of the terms of the sequence between $(a, b)$ and $(ka, kb)$ are divisible by $p^k$ and the terms following $ka$ are those following $a$ multiplied by $k$, there are no nonzero terms of the sequences considered modulo $p^k$ other than $f_0(s, t)$. However, this is a contradiction since $r(p^k)$ exists. This means that there are no such pairs $(a, b)$ and $(ka, kb)$ before $f_{r(p^k)}$. Note that sets of pairs of the form $(ka, kb)$ form an equivalence class. Since a single equivalence class contains $p^k - 1$ elements and each term of the sequence before $f_m$ must come from distinct equivalence classes, we have that $r(p^k) \le \frac{(p^k)^2 -1}{p^k - 1} = p^k + 1$. 
\end{proof}

Note that it suffices to look at $r(p^k)$ in order to understand $r(n)$ for any $n \in \mathbb{N}$ by the following theorem, which we can obtain by considering the matrix $U$ in the proof of Theorem 19. 

\begin{thm} [Robinson] \cite{Ro}
Let $m_1, m_2$ be positive integers greater than or equal to 2. Then we have that and $r([m_1, m_2]) = [r(m_1), r(m_2)]$.
\end{thm}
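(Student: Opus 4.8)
The plan is to prove the statement through the matrix formulation already used in the proof of Theorem 19, since the rank modulo $m$ has a particularly clean description there (here and throughout $[a,b]$ denotes the least common multiple, matching the notation in the statement). Recall that $U = \begin{pmatrix} s & t \\ 1 & 0 \end{pmatrix}$ satisfies
\[
U^n = \begin{pmatrix} f_{n+1}(s,t) & t f_n(s,t) \\ f_n(s,t) & t f_{n-1}(s,t) \end{pmatrix}
\]
for all $n \ge 1$, which follows immediately by induction from the recurrence. The first step I would establish is that, for any modulus $m$, the power $U^n$ is a scalar matrix modulo $m$ (that is, $U^n \equiv \lambda I \pmod m$ for some $\lambda$) if and only if $m \mid f_n(s,t)$. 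Indeed, the lower-left entry of $U^n$ is exactly $f_n(s,t)$, so if $U^n$ is scalar modulo $m$ then $m \mid f_n(s,t)$; conversely, if $m \mid f_n(s,t)$ then the recurrence $f_{n+1} = s f_n + t f_{n-1}$ forces $f_{n+1} \equiv t f_{n-1} \pmod m$, so both off-diagonal entries vanish and the two diagonal entries agree modulo $m$, making $U^n$ scalar. By the defining property of the rank, this says precisely that $U^n$ is scalar modulo $m$ if and only if $r(m) \mid n$.

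Next I would invoke the elementary divisibility fact that for any integer $N$ one has $[m_1, m_2] \mid N$ if and only if $m_1 \mid N$ and $m_2 \mid N$. Applying this with $N = f_n(s,t)$ and combining it with the characterization above yields the chain of equivalences: $U^n$ is scalar modulo $[m_1, m_2]$ $\iff$ $[m_1, m_2] \mid f_n(s,t)$ $\iff$ $m_1 \mid f_n(s,t)$ and $m_2 \mid f_n(s,t)$ $\iff$ $r(m_1) \mid n$ and $r(m_2) \mid n$ $\iff$ $[r(m_1), r(m_2)] \mid n$, where the final step is the characterization of the least common multiple as the common multiple that divides every common multiple. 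Consequently $r([m_1, m_2])$, defined as the smallest positive $n$ with $[m_1, m_2] \mid f_n(s,t)$, is the smallest positive $n$ with $[r(m_1), r(m_2)] \mid n$, and that value is $[r(m_1), r(m_2)]$ itself, giving $r([m_1, m_2]) = [r(m_1), r(m_2)]$.

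I do not expect a serious obstacle, as every step is elementary once the matrix identity is in hand. The one point requiring genuine care is the converse direction of the scalar-matrix characterization when combining moduli: knowing only that $U^n$ is scalar modulo $m_1$ and scalar modulo $m_2$ produces scalars $\lambda_1, \lambda_2$ that need not be related, so one cannot directly glue them by the Chinese Remainder Theorem into a single scalar modulo $[m_1, m_2]$. Routing the entire argument through the single off-diagonal entry $f_n(s,t)$, rather than through the scalar $\lambda$, sidesteps this difficulty cleanly. A secondary point is that the statement presupposes that $r(m_1)$, $r(m_2)$, and $r([m_1, m_2])$ all exist; by the discussion following Theorem 9 this holds whenever the moduli are coprime to $t$ (in particular always when $t = -1$), and I would record this hypothesis explicitly at the outset.
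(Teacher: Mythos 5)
Your proof is correct, and it takes essentially the approach the paper intends: the paper gives no proof of its own for this theorem (it is quoted from Robinson with a citation), saying only that it ``can be obtained by considering the matrix $U$ in the proof of Theorem 19,'' and your argument is a complete, sound elaboration of exactly that hint, including the right care about not gluing the scalars $\lambda_1,\lambda_2$ and about the existence hypotheses on $r(m_1)$, $r(m_2)$. As a minor aside, the scalar-matrix step is dispensable: since the paper's Definition 3 already states that $m \mid f_n(s,t)$ if and only if $r(m) \mid n$, your chain of divisibility equivalences through $f_n(s,t)$ alone carries the whole proof.
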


It would be interesting to find a relationship between $r(p^k)$ and $s$ for a given value of $t$ and adapt methods used above for general linear recurrences.

\section{Acknowledgements}
This research was conducted at the University of Minnesota Duluth REU program, supported by NSF/DMS grant 1062709 and NSA grant H98230-11-1-0224. I would like to thank Joe Gallian for his encouragement and creating such a great environment for research at UMD. I would also like to thank Krishanu Sankar and Sam Elder for their help with my research. I would especially like to thank Brian Chung for very helpful discussions at various points of this project.

\end{document}